\newtheorem{theorem}{Theorem}[section]
\newtheorem{corollary}[theorem]{Corollary}
 \newtheorem{lemma}[theorem]{Lemma}
 \newtheorem{proposition}[theorem]{Proposition}
 \theoremstyle{definition}
 \newtheorem{definition}[theorem]{Definition}
 \theoremstyle{remark}
 \newtheorem{remark}[theorem]{Remark}
 \numberwithin{equation}{subsection}
\begin{document}

\title{A note on covers defining relative and sectional categories
\footnotetext{This work has been supported
by the Ministerio de Educaci\'on y Ciencia grant MTM2016-78647-P.}}

\author{J.M. Garc\'{\i}a-Calcines\footnote{Universidad de La Laguna,
Facultad de Matem\'aticas, Departamento de Matem\'aticas,
Estad\'{\i}stica e I.O., 38271 La Laguna, Spain. E-mail:
\texttt{jmgarcal@ull.es}} }

\maketitle

\begin{abstract}
We give a characterization of relative category in the sense of
Doe\-raene-El Haouari by using open covers. We also prove that
relative category and sectional category can be defined by taking
arbitrary covers (not necessarily open) when dealing with ANR
spaces.
\end{abstract}

\vspace{0.5cm}
\noindent{2010 \textit{Mathematics Subject Classification} : 55M30, 55M15, 55P99.}\\
\noindent{\textit{Keywords} : Relative category, sectional
category, Lusternik-Schnirelmann category, topological complexity}
\vspace{0.2cm}

\section*{Introduction}

The aim in this paper is twofold. On the one hand we establish a
characterization of relative category by means of open covers.
Relative category of a map $f$, written $\mbox{relcat}(f),$ was
introduced in \cite{D-H} by J.P. Doeraene and M. El Haouari in
order to give a relative version of sectional category \cite{Sch}
(also called Schwarz genus). However, unlike sectional category,
relative category is not defined by means of open covers but,
instead, directly by the join construction, that is, the homotopy
pullback of the homotopy pushout of two maps having the same
target. However, considering in this paper covers by
\emph{relatively sectional} open subsets in a given cofibration
$i_X:A\hookrightarrow X$ we can establish another numerical
homotopy invariant, denoted as $\mbox{relcat}^{op}(i_X)$ in such a
way it agrees with $\mbox{relcat}(i_X)$ under non very restrictive
conditions. In this sense, in the first section of this paper we
prove the following theorem:

\medskip
\textbf{Theorem.} Let $i_X:A\hookrightarrow X$ be a cofibration
where $X$ is a normal space. Then
$\mbox{relcat}(i_X)=\mbox{relcat}^{op}(i_X).$
\medskip

As a consequence of this result we are able to prove that
Fadell-Huseini's relative category \cite{F-H} is an upper bound of
Doeraene-El Haouari's relative category. We can also obtain a
certain cohomological lower bound.

On the other hand, our second objective in this paper is to
establish generalized versions to both sectional category and
relative category. Indeed, in \cite{Sr,Sr2} T. Srinivasan defined
the notion of generalized Lus\-ter\-nik-Schni\-rel\-mann category
of a space $X,$ $\mbox{cat}_g(X),$ as the least nonnegative
integer $n$ such that $X$ admits a cover constituted by $n+1$
subsets (not necessarily open) which are contractible in $X.$ It
is a numerical homotopy invariant which agrees with
$\mbox{cat}(X)$, the usual notion of Lusternik-Schnirelmann
category, provided $X$ is an ANR. Following the same spirit as in
Srinivasan's work, in the second section of the paper we first
extend this notion to sectional category, with the so-called
\emph{generalized sectional category}, denoted as
$\mbox{secat}_g(-),$ establishing that it agrees with the usual
sectional category when dealing with ANR spaces:

\medskip
\textbf{Theorem.} Let $p:E\rightarrow B$ be a fibration between
ANR spaces. Then $\mbox{secat}_g(p)=\mbox{secat}(p).$
\medskip

This result has a nice consequence involving the \emph{generalized
topological complexity}. In this sense, when $X$ has the homotopy
type of a CW-complex, it is proved that the topological complexity
of $X$ may be defined by using a general cover of $X\times X,$ not
necessarily open. Another consequence is that we can recover T.
Srinivasan's result $\mbox{cat}(X)=\mbox{cat}_g(X)$ when $X$ is an
ANR.

Finally, using the open cover characterization of relative
category given in the first section we also introduce a
generalized version of $\mbox{relcat}(-)$, denoted as
$\mbox{relcat}_g(-),$ proving:

\medskip
\textbf{Theorem.} Let $i_X:A\hookrightarrow X$ be a cofibration
between ANR spaces. Then
$\mbox{relcat}(i_X)=\mbox{relcat}_g(i_X).$
\medskip

As a consequence, we can compare the monoidal topological
complexity with its generalized counterpart, establishing that
they agree whenever we are considering ANR spaces.

\section{Doeraene-El Haouari's relative category defined by open covers}

We begin in this section by recalling the notion of relative
category of a map \cite{D-H}. As we have commented in the
introduction, relative category is defined by using the join
construction, in particular Ganea maps. In general, the join of
two maps $f:X\rightarrow Z$ y $g:Y\rightarrow Z$, $X*_Z Y,$ is
given by the homotopy pushout of the homotopy pullback of $f$ and
$g:$
$$\xymatrix@C=0.7cm@R=0.7cm{ {\bullet } \ar[rr] \ar[dd] & & {Y} \ar[dl] \ar[dd]^g \\
 & {X*_Z Y} \ar@{.>}[dr] & \\ {X} \ar[ur] \ar[rr]_f & & {Z} }$$
\noindent where the dotted arrow, called join map, is the
corresponding co-whisker map, which is induced by the weak
universal property of the homotopy pushout. Now, if $i
_X:A\rightarrow X$ is a map, then $n$-th Ganea map of $i_X$,
$g_n:G_n(X)\rightarrow X,$ is the join map inductively defined in
the following join construction ($n\geq 0$):
$$\xymatrix@C=0.7cm@R=0.7cm{ {F_{n-1} } \ar[rr] \ar[dd] & & {A} \ar[dl]_{\alpha _n} \ar[dd]^{i_X} \\
 & {G_{n}(X)} \ar@{.>}[dr]^{g_{n}} & \\ {G_{n-1}(X)} \ar[ur] \ar[rr]_{g_{n-1}} & & {X} }$$
\noindent where $g_0:=i_X:A\rightarrow X.$ Then the \emph{relative
category} of $i_X$, denoted $\mbox{relcat}(i_X)$, is defined as
the least nonnegative integer $n$ such that $g_n:G_n(X)\rightarrow
X$ admits a homotopy section $\sigma :X\rightarrow G_n(X)$
satisfying that $\sigma \circ i_X\simeq \alpha _n.$ It is
important to note that relative category has a Whitehead
characterization. Indeed, for all $n$ we can also consider the
$n$-th sectional fat-wedge $t_n:T^n(i_X)\rightarrow X^{n+1}$
inductively defined as follows: For $n=0$ set $T^0(i_X):=A$ and
$t_0=i_X:A\rightarrow X$. If $t_{n-1}:T^{n-1}(i_X)\rightarrow X^n$
is already given, then $t^n$ is the join map
$$\xymatrix@C=0.5cm@R=0.6cm{ {\bullet } \ar[rr] \ar[dd] & & {X^n\times A}
\ar[dl] \ar[dd]^{1_{X^n}\times i_X} \\
 & {T^n(i_X)} \ar@{.>}[dr]^{t_n} & \\
 {T^{n-1}(i_X)\times X} \ar[ur] \ar[rr]_{t_{n-1}\times 1_X} & & {X^{n+1}} }$$

Moreover, there exists a homotopy commutative diagram of the form:
$$\xymatrix{
{A} \ar[rr]^{\tau _n} \ar[d]_{i_X} & & {T^n(i_X)}
\ar[d]^{t_n} \\
{X} \ar[rr]_{\Delta _{n+1}} & & {X^{n+1}}   }$$ J.P. Doeraene and
El Haouari prove the following characterization of relative
category:

\begin{theorem}\label{DH}\cite[Prop. 26]{D-H}
Let $i_X:A\rightarrow X$ be a map. Then $\mbox{relcat}(i_X)\leq n$
if and only if there exists a map $f:X\rightarrow T^n(i_X)$ making
commutative, up to homotopy, the following diagram
$$\xymatrix{
{A} \ar[rr]^{\tau _n} \ar[d]_{i_X} & & {T^n(i_X)}
\ar[d]^{t_n} \\
{X} \ar@{.>}[urr]^f  \ar[rr]_{\Delta _{n+1}} & & {X^{n+1}}   }$$
\end{theorem}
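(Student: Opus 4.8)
The plan is to exploit the duality between the two join-constructed towers attached to $i_X$: the Ganea tower $g_n\colon G_n(X)\to X$ and the sectional fat-wedge tower $t_n\colon T^n(i_X)\to X^{n+1}$. The central claim I would establish is that pulling back $t_n$ along the diagonal $\Delta_{n+1}\colon X\to X^{n+1}$ recovers, up to homotopy equivalence over $X$, the Ganea map $g_n$, and that under this equivalence the structural map $\tau_n\colon A\to T^n(i_X)$ corresponds to $\alpha_n\colon A\to G_n(X)$. Granting this, the equivalence in the statement becomes formal: by the weak universal property of the homotopy pullback, a map $f\colon X\to T^n(i_X)$ with $t_n\circ f\simeq \Delta_{n+1}$ is the same datum as a homotopy section $\sigma\colon X\to G_n(X)$ of $g_n$, and the compatibility $f\circ i_X\simeq \tau_n$ translates precisely into $\sigma\circ i_X\simeq \alpha_n$, which is the defining condition of $\mbox{relcat}(i_X)\le n$.

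I would prove the pullback identification by induction on $n$. For $n=0$ one has $G_0(X)=A=T^0(i_X)$, $g_0=i_X=t_0$ and $\Delta_1=1_X$, so the pullback of $t_0$ along $\Delta_1$ is $i_X=g_0$, while $\tau_0=1_A=\alpha_0$. For the inductive step, recall that $T^n(i_X)$ is the join of $t_{n-1}\times 1_X$ and $1_{X^n}\times i_X$ over $X^{n+1}$, whereas $G_n(X)$ is the join of $g_{n-1}$ and $i_X$ over $X$. Using the factorization $\Delta_{n+1}=(\Delta_n\times 1_X)\circ\Delta_2$ through $X^n\times X$, a direct computation of pullbacks shows that base change along $\Delta_{n+1}$ carries the right leg $1_{X^n}\times i_X$ to $i_X\colon A\to X$ and the left leg $t_{n-1}\times 1_X$ to the pullback of $t_{n-1}$ along $\Delta_n$, which by the inductive hypothesis is $g_{n-1}\colon G_{n-1}(X)\to X$. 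Pulling the entire defining join square for $t_n$ back along $\Delta_{n+1}$ therefore yields exactly the Ganea join square defining $g_n$, and tracking the induced maps on the $A$-corner shows that $\tau_n$ is carried to $\alpha_n$.

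The main obstacle is precisely this homotopy-coherence step: one must verify that forming the join of two maps over a space $Z$ commutes, up to canonical homotopy equivalence, with base change along a map $W\to Z$ — equivalently, that the homotopy pushout defining the join and the homotopy pullback along $\Delta_{n+1}$ may be interchanged. This requires the ambient category to possess a well-behaved theory of homotopy pushouts and pullbacks (a $J$-category in Doeraene's sense), in which base change preserves homotopy pushouts. Once this interchange is justified, the remaining work — checking that the comparison equivalences are compatible with the whisker maps $g_n$, $t_n$ and with the structural sections $\alpha_n$, $\tau_n$ — is routine diagram chasing, and both implications of the statement follow simultaneously from the single pullback identification.
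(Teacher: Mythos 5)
You should first note that the paper does not prove this statement at all: it is quoted verbatim from Doeraene--El Haouari \cite[Prop.~26]{D-H}, so there is no internal proof to compare against. That said, your outline reproduces the standard route to this result, and it is essentially the one taken in \cite{D-H}: identify the homotopy pullback of $t_n$ along $\Delta_{n+1}$ with $g_n$ by induction, using the factorization $\Delta_{n+1}=(\Delta_n\times 1_X)\circ\Delta_2$ and the fact that base change preserves joins (Mather's cube theorem, or the join axiom of a $J$-category, which holds in $\mathbf{Top}$). The base case and the computation of the two legs under base change are correct as you describe them.

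The one place where your argument is genuinely incomplete is the sentence claiming that the compatibility $f\circ i_X\simeq\tau_n$ ``translates precisely'' into $\sigma\circ i_X\simeq\alpha_n$ by the weak universal property. The weak universal property of the homotopy pullback produces \emph{some} whisker map $\sigma$ with $g_n\circ\sigma\simeq 1_X$ and $\pi\circ\sigma\simeq f$ (where $\pi:G_n(X)\to T^n(i_X)$ is the projection), but whisker maps are not unique up to homotopy, and two maps into a homotopy pullback whose composites with both projections agree up to homotopy need not themselves be homotopic. So from $\pi\circ\sigma\circ i_X\simeq f\circ i_X\simeq\tau_n\simeq\pi\circ\alpha_n$ and $g_n\circ\sigma\circ i_X\simeq i_X\simeq g_n\circ\alpha_n$ you cannot formally conclude $\sigma\circ i_X\simeq\alpha_n$. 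The standard repair, and the one implicit in \cite{D-H}, is to carry out the entire pullback identification in the category of objects under $A$: replace $i_X$ by a cofibration, note that $\alpha_n$ and $\tau_n$ are the structure maps of the two towers in that under-category, and invoke the weak universal property there (equivalently, choose the homotopies $t_n\circ f\simeq\Delta_{n+1}$ rel $A$, which the cofibration hypothesis permits). With that adjustment the rest of your argument --- the inductive cube-theorem step and the bookkeeping of the $A$-corner --- goes through and does yield both implications.
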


For our purposes of establishing an open cover characterization of
relative category we are specially interested in cofibrations.
Throughout this paper by a cofibration we will mean \emph{a closed
map having the homotopy extension property}. The main reason of
using cofibrations is that, in this case, we have a nice explicit
description of the $n$-th sectional fat-wedge:

\begin{proposition}\cite[Cor. 11]{C-V} {\rm
Let $i_X:A\hookrightarrow X$ be a cofibration. Then the $n$-th
sectional fat-wedge $t_n:T^n(i_X)\hookrightarrow X^{n+1}$ is, up
to homotopy equivalence, the following subspace of $X^{n+1}$:
$$T^n(i_X)=\{(x_0,x_1,...,x_n)\in X^{n+1}\hspace{3pt}:x_i\in A\hspace{3pt}\mbox{for some}\hspace{3pt}i\}$$
\noindent $t_n$ being the canonical inclusion ($t_n$ is actually,
a cofibration).}
\end{proposition}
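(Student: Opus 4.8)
The plan is to argue by induction on $n$, identifying at each stage the join map $t_n$ with an honest inclusion of subspaces of $X^{n+1}$. The base case $n=0$ is immediate: by definition $T^0(i_X)=A$ and $t_0=i_X$, which is exactly the inclusion of $\{x_0\in X: x_0\in A\}=A$ and is a cofibration by hypothesis. For the inductive step I assume that $t_{n-1}\colon T^{n-1}(i_X)\hookrightarrow X^n$ is, up to homotopy equivalence, the inclusion of $\{(x_0,\dots,x_{n-1}): x_i\in A \text{ for some } i\}$ and that it is a cofibration. Since the join construction is invariant under homotopy equivalence of the input cospan, I may assume $t_{n-1}$ is literally this inclusion. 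By definition $t_n$ is the join map of $f=t_{n-1}\times 1_X\colon T^{n-1}(i_X)\times X\to X^{n+1}$ and $g=1_{X^n}\times i_X\colon X^n\times A\to X^{n+1}$, so the whole point is to compute the homotopy pushout of the homotopy pullback of this cospan and to see that it reduces to the honest union of the two images.

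The first key step is the homotopy pullback. Although for two arbitrary closed cofibrations the homotopy pullback need not coincide with the intersection, here the two maps constrain complementary blocks of coordinates: $f$ restricts the first $n$ coordinates to $T^{n-1}(i_X)$ and leaves the last free, while $g$ restricts the last coordinate to $A$ and leaves the first $n$ free. Writing the cospan as a product of two cospans over the blocks $\{0,\dots,n-1\}$ and $\{n\}$, in each block one of the two legs is an identity map. Since the homotopy pullback of a cospan in which one leg is $1_W\colon W\to W$ deformation retracts onto the source of the remaining map, and since homotopy pullbacks commute with products, the homotopy pullback of $(f,g)$ is $T^{n-1}(i_X)\times A$; moreover the canonical map from the honest pullback $(T^{n-1}(i_X)\times X)\cap(X^n\times A)=T^{n-1}(i_X)\times A$ into the homotopy pullback is precisely this deformation retract inclusion, hence an equivalence. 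Thus the intersection models the homotopy pullback.

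The second step is the homotopy pushout. The two projections out of the homotopy pullback are the inclusions $T^{n-1}(i_X)\times A\hookrightarrow T^{n-1}(i_X)\times X$ and $T^{n-1}(i_X)\times A\hookrightarrow X^n\times A$. The first is $1\times i_X$, a closed cofibration, so the homotopy pushout agrees with the honest pushout, which is the union of the two subspaces glued along their intersection, namely $\{(x_0,\dots,x_n): x_i\in A \text{ for some } i\}$. This is exactly the asserted $T^n(i_X)$, and the induced co-whisker map $t_n$ is the canonical inclusion into $X^{n+1}$. To see $t_n$ is a cofibration I would invoke the union theorem for cofibrations (Lillig): $T^{n-1}(i_X)\times X$, $X^n\times A$ and their intersection $T^{n-1}(i_X)\times A$ are each closed cofibrations in $X^{n+1}$ (products of $t_{n-1}$ and $i_X$ with identities, and the product of two closed cofibrations in the case of the intersection), so their union $T^n(i_X)$ is a closed cofibration.

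The main obstacle, and the step deserving the most care, is the homotopy pullback identification. One must justify the coordinate-block splitting and the fact that replacing the homotopy pullback by the strict intersection is legitimate precisely because one leg in each block is an equivalence — a simplification that is false for general pairs of closed cofibrations and relies essentially on the product form of the sectional fat-wedge stages. Everything else is the routine homotopy invariance of the join together with standard gluing and union theorems for cofibrations.
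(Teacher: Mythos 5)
The paper does not actually prove this proposition; it is quoted verbatim from \cite[Cor.~11]{C-V}, so there is no in-text argument to compare yours against. Judged on its own, your proof is correct and follows the natural route one would expect the cited reference to take: induction on $n$, with the crucial observation that the cospan defining $t_n$ splits as a product of two cospans, in each of which one leg is an identity, so that the strict pullback $T^{n-1}(i_X)\times A$ maps into the standard homotopy pullback by a deformation retract inclusion compatible with the projections. This is exactly the point where a naive ``homotopy pullback of two cofibrations equals their intersection'' would fail, and you correctly isolate why it works here. The remaining steps (strict pushout computes the homotopy pushout since one leg is the closed cofibration $1\times i_X$; the co-whisker map becomes the honest inclusion of the union; Lillig's union theorem gives that $t_n$ is a closed cofibration, using that $t_{n-1}\times i_X$ is a cofibration as a composite of $t_{n-1}\times 1_A$ and $1\times i_X$) are all sound. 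The only point you pass over silently is that the set-theoretic pushout of $T^{n-1}(i_X)\times X \hookleftarrow T^{n-1}(i_X)\times A \hookrightarrow X^n\times A$ carries the quotient topology, which must be checked to agree with the subspace topology on the union inside $X^{n+1}$; this holds because both pieces are closed in their union (using that cofibrations are closed inclusions here), and is worth one sentence.
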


Moreover, one can easily check that $\tau _n:A\rightarrow
T^n(i_X)$ is defined, up to homotopy equivalence, as $\tau
_n(a)=(a,a,...,a).$ If $\Delta _{n+1}:X\rightarrow X^{n+1}$
denotes the $(n+1)$-th diagonal map, then there exists a strictly
commutative diagram:
$$\xymatrix{
{A} \ar[rr]^{\tau _n} \ar@{^{(}->}[d]_{i_X} & & {T^n(i_X)}
\ar@{^{(}->}[d]^{t_n} \\
{X} \ar[rr]_{\Delta _{n+1}} & & {X^{n+1}}   }$$

Now we give our definition of relative category by means of open
covers. As we have previously said, we will consider cofibrations.

\begin{definition}\label{primordial}{\rm
Let $i_X:A\hookrightarrow X$ be a cofibration. We say that a
subset $U\subseteq X$ is \emph{relatively sectional} if
$A\subseteq U$ and there exists a homotopy of pairs $H:(U\times
I,A\times I)\rightarrow (X,A)$ satisfying that $H(x,0)=x$ and
$H(x,1)\in A$ for all $x\in U.$  Then we define
$\mbox{relcat}^{op}(i_X)$ as the least nonnegative integer $n$
such that $X$ admits a cover constituted by $n+1$ relatively
sectional open subsets. If such an integer does not exist then we
simply set $\mbox{relcat}^{op}(i_X)=\infty .$}
\end{definition}

\begin{remark}
We can obviously define $\mbox{relcat}^{op}(i_X)$ when $i_X$ is a
general map by considering the cofibration that approximates
$i_X.$ However, for the sake of simplicity, in this paper we have
preferred to work directly with cofibrations.
\end{remark}

Now we see the relationship between $\mbox{relcat}^{op}$ and
relative category in the sense of Doeraene-El Haouari. For this
task we will use the following lemma, whose proof can be found,
for instance, in \cite{War}, and therefore is omitted:

\begin{lemma}\label{previo}{\rm
Let $j:A\hookrightarrow X$ be a cofibration and $f:X\rightarrow X$
a map such that $f\circ j=j$ and $f\simeq 1_X.$ Then there exists
a map $g:X\rightarrow X$ satisfying $g\circ j=j$ and $g\circ
f\simeq 1_X$ rel. $A.$}
\end{lemma}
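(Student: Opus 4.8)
The plan is to exploit the homotopy extension property (HEP) of the cofibration $j$ twice: first to manufacture the map $g$ out of a chosen free homotopy $1_X\simeq f$, and then to rectify a free homotopy $1_X\simeq g\circ f$ into one rel $A$.

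First I would fix a free homotopy $F\colon X\times I\to X$ with $F_0=1_X$ and $F_1=f$, and restrict it along $j$ to obtain $\gamma:=F\circ(j\times 1_I)\colon A\times I\to X$. Since $F_0\circ j=j$ and $F_1\circ j=f\circ j=j$, the map $\gamma$ is a self-homotopy of $j$, i.e. $\gamma_0=\gamma_1=j$. Write $\bar\gamma$ for the reversed homotopy, $\bar\gamma_s=\gamma_{1-s}$; note $\bar\gamma_0=\gamma_1=j=1_X\circ j$. Applying HEP to the map $1_X\colon X\to X$ and the homotopy $\bar\gamma$ yields $\Theta\colon X\times I\to X$ with $\Theta_0=1_X$ and $\Theta\circ(j\times 1_I)=\bar\gamma$. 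I then set $g:=\Theta_1$. Because $g\circ j=\Theta_1\circ j=\bar\gamma_1=\gamma_0=j$, the map $g$ lies under $A$, as required.

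Next I would produce a free homotopy $1_X\simeq g\circ f$ with controlled restriction to $A$. Composing $F$ (from $1_X$ to $f$) with $\Theta\circ(f\times 1_I)$ (which runs from $1_X\circ f=f$ to $\Theta_1\circ f=g\circ f$) gives a free homotopy $\Psi\colon 1_X\simeq g\circ f$. Using $f\circ j=j$, the piece $\Theta\circ(f\times 1_I)$ restricts along $j$ to $\bar\gamma$, so the restriction of $\Psi$ along $j$ is the concatenation $\gamma\cdot\bar\gamma$. Since a path of maps concatenated with its reverse is nullhomotopic rel endpoints, this restriction is nullhomotopic rel endpoints through self-homotopies of $j$.

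The main obstacle is the final upgrade: promoting $\Psi$, whose $A$-restriction is inessential rel endpoints, to a genuine homotopy rel $A$. For this I would view the problem over the square $I\times I$ with coordinates $(s,t)$. The nullhomotopy of $\gamma\cdot\bar\gamma$ gives a map $L\colon A\times I\times I\to X$; together with $\Psi$ on $X\times I\times\{0\}$ and the constant homotopies $1_X$, $g\circ f$ on the two vertical edges $X\times\{0,1\}\times I$, this defines a map on $X\times\Gamma\cup A\times(I\times I)$, where $\Gamma\subseteq\partial(I\times I)$ is the union of the bottom $I\times\{0\}$ and the two sides $\{0,1\}\times I$. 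Since $(I\times I,\Gamma)$ is homeomorphic to $(I\times I,I\times\{0\})$, filling in the remaining top edge $X\times I\times\{1\}$ is precisely a homotopy extension problem for the cofibration $j$, hence solvable. Restricting the resulting extension to the top edge produces a homotopy $1_X\simeq g\circ f$ that is constant equal to $j$ on $A$, i.e. $g\circ f\simeq 1_X$ rel $A$. This HEP-in-the-square step is the crux, as it is the only place where the cofibration hypothesis on $j$ is genuinely used.
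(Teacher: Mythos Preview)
Your argument is correct. The paper itself does not prove this lemma at all: it simply remarks that a proof can be found in \cite{War} and omits it. Thus there is no ``paper's approach'' to compare against, and your two-step HEP argument (build $g$ by extending the reversed loop $\bar\gamma$, then rectify $\Psi$ to a homotopy rel $A$ via an HEP over the square using $(I^2,\Gamma)\cong(I^2,I\times\{0\})$) is a standard and complete proof of the statement.

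One small expository slip: you write that the HEP-in-the-square step ``is the only place where the cofibration hypothesis on $j$ is genuinely used,'' but you already used HEP in the first paragraph to produce $\Theta$ (and hence $g$). Both steps require the cofibration hypothesis. This does not affect the validity of the proof.
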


Now we are in conditions to state and prove the main result in
this section.

\begin{theorem}\label{mainth}{\rm
If $i_X:A\hookrightarrow X$ is a cofibration where $X$ is a normal
space, then $\mbox{relcat}(i_X)=\mbox{relcat}^{op}(i_X).$ }
\end{theorem}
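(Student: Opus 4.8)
The plan is to prove the equality by establishing both inequalities, leveraging the Whitehead characterization from Theorem~\ref{DH} together with the explicit model of the sectional fat-wedge available for cofibrations. The two numbers to compare are $\mbox{relcat}(i_X)$, defined via the existence of a lift $f:X\rightarrow T^n(i_X)$ of $\Delta_{n+1}$ compatible (up to homotopy) with $\tau_n$, and $\mbox{relcat}^{op}(i_X)$, defined via covers of $X$ by $n+1$ relatively sectional open sets. Since both invariants are built on the same integer $n$, it suffices to show that $\mbox{relcat}^{op}(i_X)\leq n$ implies the existence of the Whitehead lift, and conversely.

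\medskip

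For the inequality $\mbox{relcat}(i_X)\leq\mbox{relcat}^{op}(i_X)$, I would start from an open cover $U_0,\dots,U_n$ by relatively sectional subsets, with corresponding homotopies of pairs $H_j:(U_j\times I, A\times I)\rightarrow (X,A)$ pushing each $U_j$ into $A$ while fixing the endpoints. The idea is to assemble these into a map $f:X\rightarrow T^n(i_X)$, using the explicit description $T^n(i_X)=\{(x_0,\dots,x_n):x_i\in A \text{ for some } i\}$. Concretely, for a point $x\in X$ lying in some $U_j$, the homotopy $H_j(\,\cdot\,,1)$ lands in $A$, so one can try to define the $j$-th coordinate of $f(x)$ to be this image in $A$ and the remaining coordinates to be $x$ itself (or a homotoped version). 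The key technical device is to use a partition-of-unity-type argument, which is exactly where \emph{normality} of $X$ enters: normality guarantees a shrinking of the open cover and continuous Urysohn-type functions subordinate to it, allowing one to glue the locally defined fat-wedge coordinates into a globally continuous map. One then checks that $t_n\circ f\simeq \Delta_{n+1}$ and that $f\circ i_X\simeq \tau_n$, invoking the homotopies $H_j$ to verify the diagonal and base-point compatibility, so Theorem~\ref{DH} applies.

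\medskip

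For the reverse inequality $\mbox{relcat}^{op}(i_X)\leq\mbox{relcat}(i_X)$, I would assume the Whitehead lift $f:X\rightarrow T^n(i_X)$ exists and recover an open cover. Setting $U_j:=f^{-1}(V_j)$ where $V_j=\{(x_0,\dots,x_n)\in T^n(i_X):x_j\in A\}$ gives open subsets (the $V_j$ are open in $T^n(i_X)$ since $A$ is closed in $X$, hence the condition $x_j\in A$ is a closed condition whose complement behaves well under the cofibration structure) which cover $X$ because the image of $f$ lands in the fat-wedge, where at least one coordinate is in $A$. The remaining work is to verify that each $U_j$ is relatively sectional: the $j$-th coordinate projection of $f$ restricted to $U_j$, composed with the homotopy realizing $t_n\circ f\simeq\Delta_{n+1}$, should produce the required homotopy of pairs contracting $U_j$ into $A$. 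Here is where Lemma~\ref{previo} does its work: the compatibility $f\circ i_X\simeq\tau_n$ only holds up to homotopy, and to get the relatively-sectional homotopy to respect $A$ on the nose (i.e.\ to behave correctly rel.\ $A$, fixing points of $A$ throughout), I would apply the lemma to rectify the relevant self-map of $X$ so that it restricts to the identity on $A$ while remaining homotopic to the identity rel.\ $A$.

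\medskip

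The main obstacle I anticipate is the gluing step in the forward direction: passing from finitely many \emph{local} homotopies $H_j$, each defined only over $U_j$, to a single \emph{global} continuous map $f:X\to T^n(i_X)$ with the correct homotopy-commutativity. The naive coordinatewise definition is discontinuous across the boundaries where one leaves a $U_j$, so one must interpolate using a subordinate partition of unity and arrange the fat-wedge coordinates to degenerate continuously to the diagonal value $x$ as the partition function vanishes; keeping the image inside $T^n(i_X)$ throughout (at least one coordinate genuinely in $A$) while doing so is the delicate bookkeeping. Normality is precisely the hypothesis that makes this interpolation possible, and I expect the proof to hinge on choosing the Urysohn functions carefully so that wherever $f(x)$ is evaluated, the coordinate steered fully into $A$ corresponds to a partition function that is nonzero there.
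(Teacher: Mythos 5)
Your overall strategy matches the paper's: both directions go through Theorem~\ref{DH} and the explicit model of $T^n(i_X)$, with a shrinking/Urysohn argument in one direction and Lemma~\ref{previo} in the other. However, there are two genuine problems, one in each direction.

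In the forward direction, your final criterion --- that the coordinate steered into $A$ should correspond to a partition function that is \emph{nonzero} --- is not sufficient. If $0<h_j(x)<1$ then $H_j(x,h_j(x))$ need not lie in $A$, and if no coordinate function equals $1$ at $x$ then $f(x)$ fails to land in $T^n(i_X)$ at all. A genuine partition of unity ($\sum h_j=1$) does not guarantee any $h_j(x)=1$. What is needed, and what normality actually provides, is a shrinking $A_j\subseteq\Theta_j\subseteq B_j\subseteq U_j$ with the $A_j$ \emph{closed and still covering} $X$, together with Urysohn functions satisfying $h_j\equiv 1$ on $A_j$ and $h_j\equiv 0$ off $\Theta_j$; then every $x$ lies in some $A_j$, forcing $L_j(x,1)=H_j(x,1)\in A$ and hence $f(x)\in T^n(i_X)$. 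You gesture at the shrinking but your stated sufficient condition would let the construction fail.

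In the reverse direction the gap is more serious: the sets $V_j=\{(x_0,\dots,x_n)\in T^n(i_X):x_j\in A\}$ are \emph{closed} in $T^n(i_X)$ (the condition $x_j\in A$ is closed precisely because $A$ is closed in $X$), not open as you assert --- already for $n=1$, $A\times X$ is not open in $(A\times X)\cup(X\times A)$ unless $A$ is open. So $U_j:=f^{-1}(V_j)$ gives a cover by closed sets, which establishes a bound on a general-cover invariant but not on $\mbox{relcat}^{op}(i_X)$, whose definition requires \emph{open} relatively sectional subsets. The missing idea is to thicken $A$: since $i_X$ is a closed cofibration, $A$ admits an open neighbourhood $U\supseteq A$ in $X$ that strongly deformation retracts onto $A$ rel $A$. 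After rectifying the coordinates $f_j$ via Lemma~\ref{previo} to maps $\varphi_j$ with $\varphi_j\circ i_X=i_X$ and $1_X\simeq\varphi_j$ rel $A$, one sets $U_j:=\varphi_j^{-1}(U)$ (now genuinely open, containing $A$, and covering $X$) and concatenates the rel-$A$ homotopy $1_X\simeq\varphi_j$ with the deformation retraction of $U$ onto $A$ to produce the required homotopy of pairs. Without this neighbourhood step your argument does not produce an open cover.
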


\begin{proof}

Suppose that $\mbox{relcat}^{op}(i_X)\leq n$ and take
$\{U_i\}_{i=0}^n$ a cover of $X$ by relatively sectional open
subsets. Also consider homotopies $H_i:(U_i\times I,A\times
I)\rightarrow (X,A)$ such that $H_i(x,0)=x$ and $H_i(x,1)\in A.$
As $X$ is normal we can take closed subsets $A_i,B_i$ and open
subsets $\Theta _i$ satisfying that $A_i\subseteq \Theta
_i\subseteq B_i\subseteq U_i$ where $\{A_i\}_{i=0}^n$ is a cover
of $X.$ There are also Urysohn maps $h_i:X\rightarrow [0,1]$ such
that $h_i(A_i)=\{1\}$ and $h_i(X\setminus \Theta _i)=\{0\}.$ We
define $L_i:(X\times I,A\times I)\rightarrow (X,A)$ as
$$L_i(x,t)=\begin{cases}
x, & x\in X\setminus B_i \\
H_i(x,th_i(x)), & x\in U_i
\end{cases}$$
\noindent and take $L:(X\times I,A\times I)\rightarrow
(X^{n+1},T^n(i_X))$ as $L=(L_0,L_1,...,L_n).$ Defining
$f(x):=L(x,1)$, we obtain a map $f:X\rightarrow T^n(i_X)$
satisfying $L|_{A\times I}:\tau _n\simeq f\circ i_X$ and $L:\Delta
_{n+1}\simeq t_n\circ f$. Therefore, by Theorem \ref{DH},
$\mbox{relcat}(i_X)\leq n.$

Conversely, suppose that $\mbox{relcat}(i_X)\leq n$ and take,
again by Theorem \ref{DH}, a map $f:X\rightarrow T^n(i_X)$
satisfying $f\circ i_X\simeq \tau _n$ and $t_n\circ f\simeq \Delta
_{n+1}.$ As $i_X$ is a cofibration we can suppose, without losing
generality, that $f\circ i_X=\tau _n$ and $t_n\circ f\simeq \Delta
_{n+1}.$ Take a homotopy $L: t_n\circ f\simeq \Delta _{n+1}$ and
consider $t_n\circ f=(f_0,...,f_n)$ and $L=(L_0,...,L_n)$ with
$f_i:X\rightarrow X$ and $L_i:X\times I\rightarrow X$ for all
$i\in \{0,1,...,n\}.$ Observe that $f_i \circ i_X=i_X$ and
$L_i:f_i\simeq 1_X.$ Hence, by Lemma \ref{previo} above, we can
find a map $g_i:X\rightarrow X$ such that $g_i \circ i_X=i_X$ and
a homotopy $L'_i:1_X\simeq g_i\circ f_i $ rel $A.$ We establish
$$\varphi _i:=g_i\circ f_i:X\rightarrow X$$
Taking into account that $(\varphi _0(x),...,\varphi _n(x))\in
T^n(i_X)$ for all $x\in X,$ we obtain $\varphi :X\rightarrow
T^n(i_X)$ such that $t_n\circ \varphi =(\varphi _0,...,\varphi
_n).$ Obviously, $\varphi \circ i_X=\tau _n$ and
$L'=(L'_0,...,L'_n)$ is a homotopy $L':\Delta _{n+1}\simeq
t_n\circ \varphi $ rel $A.$

Now, using the fact that $i_X$ is a cofibration, there exists
$A\subseteq U\subseteq X,$ an open neighbourhood of $A$ such that
$A$ is a strong deformation rectract of $U$. Take $r:U\rightarrow
A$ the corresponding retraction and $H:U\times I\rightarrow X$ a
homotopy verifying $H(x,0)=x,$ $H(x,1)=r(x)$ and $H(a,t)=a.$
Defining $U_i:=\varphi _i^{-1}(U)$ we have that $\{U_i\}_{i=0}^n$
is an open cover of $X$ in such a way that each $U_i$ contains
$A.$ Finally, the homotopy $G_i:(U_i\times I,A\times I)\rightarrow
(X,A)$ defined as
$$G_i(x,t)=\begin{cases}
L'_i(x,2t), & 0\leq t\leq \frac{1}{2} \\
H(\varphi _i(x),2t-1), & \frac{1}{2}\leq t\leq 1
\end{cases}$$
\noindent gives $G_i(x,0)=x$ and $G_i(x,1)\in A.$ This means that
each  $U_i$ is relatively sectional and therefore
$\mbox{relcat}^{op}(i_X)\leq n.$
\end{proof}

In actual fact, taking a careful reading of the proof above, we
have the following characterizations of relative category:

\begin{corollary}\label{carac}
Let $i_X:A\hookrightarrow X$ be a cofibration, where $X$ is a
normal space. Then the following statements are equivalent:
\begin{enumerate}
\item[(i)] $\mbox{relcat}(i_X)\leq n$.

\item[(ii)] $\mbox{relcat}^{op}(i_X)\leq n$.

\item[(iii)] $X$ admits an open cover $U_0,U_1,...,U_n$
such that for all $i$, $A\subseteq U_i$ and there exists a
homotopy $H_i:U_i\times I\rightarrow X$ verifying $H_i(x,0)=x,$
$H_i(x,1)\in A$ and $H_i(a,t)=a$, for all $x\in X,a\in A$ and
$t\in I.$

\item[(iv)] $X$ admits an open cover $U_0,U_1,...,U_n$
such that for all $i$, $A\subseteq U_i$ and there exists a
homotopy $H_i:U_i\times I\rightarrow X$ verifying $H_i(x,0)=x,$
$H_i(x,1)\in A$ and $H_i(a,1)=a$, for all $x\in X,a\in A$.
\end{enumerate}
\end{corollary}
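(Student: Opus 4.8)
The plan is to prove Corollary \ref{carac} by establishing the cycle of implications $(iii)\Rightarrow(iv)\Rightarrow(ii)\Rightarrow(i)\Rightarrow(iii)$, since $(i)\Leftrightarrow(ii)$ is already the content of Theorem \ref{mainth} and the remaining work is to show that the two strengthened forms of the homotopy condition (controlling the behaviour on $A$) can be assumed without loss of generality. The implications $(iii)\Rightarrow(iv)$ and $(iv)\Rightarrow(ii)$ are immediate: condition $(iii)$ asks for a homotopy fixing $A$ at all times, condition $(iv)$ only asks that the endpoint $H_i(a,1)=a$ be respected, and condition $(ii)$ (via Definition \ref{primordial}) merely asks for a homotopy of pairs sending $A$ into $A$. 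Thus $(iii)$ is formally stronger than $(iv)$, which is formally stronger than $(ii)$, so these two arrows require no real argument beyond unwinding the definitions.

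The only substantive step is $(i)\Rightarrow(iii)$, and the key observation is that this has effectively already been carried out in the proof of Theorem \ref{mainth}. First I would recall that in the converse direction of that proof, starting from $\mbox{relcat}(i_X)\leq n$, one constructs an open cover $U_i=\varphi_i^{-1}(U)$ together with homotopies $G_i:(U_i\times I,A\times I)\rightarrow(X,A)$ given by concatenating $L'_i$ (a homotopy $1_X\simeq\varphi_i$ \emph{rel} $A$) with the strong deformation retraction $H$ of $U$ onto $A$. Since $L'_i$ is relative to $A$ we have $L'_i(a,t)=a$, and since $H$ is a strong deformation retraction we have $H(a,t)=a$; moreover $\varphi_i(a)=a$ because $\varphi_i\circ i_X=i_X$. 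Consequently the concatenation $G_i$ satisfies $G_i(a,t)=a$ for all $t\in I$, which is precisely the stationarity condition demanded in $(iii)$. Hence the very homotopies produced in the proof of the theorem already verify the stronger condition $(iii)$, and the implication is obtained by simply pointing to that construction and recording this additional property.

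I would close the loop by noting that $(ii)\Rightarrow(i)$ is supplied by Theorem \ref{mainth}, so combining it with the chain above yields the equivalence of all four statements. The main (and in fact only) obstacle is the verification that the homotopies $G_i$ fix $A$ pointwise throughout, rather than merely as a homotopy of pairs; but this follows transparently from the fact that both pieces of the concatenation are relative to $A$, so no genuine difficulty arises. The role of normality of $X$ enters only through Theorem \ref{mainth} (in the forward direction, via the Urysohn functions used to shrink the cover), and need not be reinvoked here.
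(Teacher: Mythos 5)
Your cycle of implications breaks at the arrow $(iv)\Rightarrow(ii)$. You assert that $(iv)$ is ``formally stronger'' than $(ii)$ and that the arrow needs no argument beyond unwinding definitions, but the two conditions are in fact incomparable a priori: condition $(ii)$, via Definition \ref{primordial}, demands a homotopy \emph{of pairs} $H_i:(U_i\times I,A\times I)\rightarrow (X,A)$, i.e.\ $H_i(a,t)\in A$ for \emph{every} $t\in I$ (while requiring nothing of $H_i(a,1)$ beyond membership in $A$); condition $(iv)$ pins down the terminal value $H_i(a,1)=a$ but places no constraint whatsoever on $H_i(a,t)$ for $0<t<1$, so the track of a point of $A$ is free to leave $A$ during the homotopy. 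A homotopy witnessing $(iv)$ therefore need not witness $(ii)$, and this arrow is a genuine gap in your argument.

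The paper closes the loop differently: it proves $(iv)\Rightarrow(i)$ directly, by rerunning the first half of the proof of Theorem \ref{mainth}. One chooses closed sets $A_i,B_i$ and open sets $\Theta_i$ with $A\subseteq A_i\subseteq \Theta_i\subseteq B_i\subseteq U_i$ and $\{A_i\}_{i=0}^n$ covering $X$, takes Urysohn functions $h_i$, and forms $L_i(x,t)=H_i(x,t\,h_i(x))$ on $U_i$ and $L_i(x,t)=x$ off $B_i$, with $f:=L(-,1)$ where $L=(L_0,\dots,L_n)$. The only place the homotopy-of-pairs property was used in the original argument was to make $L|_{A\times I}$ a homotopy $\tau_n\simeq f\circ i_X$ inside $T^n(i_X)$; under $(iv)$ one instead observes that $h_i(a)=1$ for $a\in A$, so $f(a)=\bigl(H_0(a,1),\dots,H_n(a,1)\bigr)=(a,\dots,a)=\tau_n(a)$ holds on the nose, and Theorem \ref{DH} still yields $\mbox{relcat}(i_X)\leq n$. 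The remainder of your proposal is fine and agrees with the paper: $(i)\Leftrightarrow(ii)$ is Theorem \ref{mainth}, $(iii)\Rightarrow(iv)$ and $(iii)\Rightarrow(ii)$ are immediate, and your verification of $(i)\Rightarrow(iii)$ --- that the homotopies $G_i$ built in the second half of that proof fix $A$ pointwise because both $L'_i$ (rel $A$) and the strong deformation retraction $H$ do, and $\varphi_i\circ i_X=i_X$ --- is exactly what the paper intends. You should simply replace your $(iv)\Rightarrow(ii)$ step by the direct argument $(iv)\Rightarrow(i)$ above.
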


\begin{proof}
By the theorem above, (i) and (ii) are equivalent. From the second
part of the proof of the theorem we deduce that both parts are
also equivalent to (iii). Moreover, (iii) clearly implies (iv).
Finally, if $X$ admits an open cover as in part (iv), then we can
consider closed subsets $A_i,B_i$ and open subsets $\Theta _i$
satisfying $A\subseteq A_i\subseteq \Theta _i\subseteq
B_i\subseteq U_i$ and $\{A_i\}_{i=0}^n$ covers $X.$ A similar
argument to that done in the firts part of the proof in the
theorem above shows that $\mbox{relcat}(i_X)\leq n.$
\end{proof}

An interesting consequence we obtain of this characterization is
the relationship between Doeraene-El Haouari's relative category
and Fadell-Husseini's relative category \cite{F-H} of a cofibred
pair $(X,A).$ Recall that given $A\hookrightarrow X$ a
cofibration, the Fadell-Husseini relative category of $(X,A)$,
denoted as $\mbox{cat}^{FH}(X,A),$ is the least nonnegative
integer $n$ such that $X$ admits a cover constituted by $n+1$ open
subsets $\{U_i\}_{i=0}^n$ satisfying:
\begin{enumerate}
\item[(i)] $A\subseteq U_0$ and there exists a homotopy of pairs
$H:(U_0\times I,A\times I)\rightarrow (X,A)$ such that $H(x,0)=x$
and $H(x,1)\in A$ for all $x\in U_0$ (that is, $U_0$ is a
relatively sectional open subset).
\item[(ii)] For all $i\geq 1,$ $U_i$ is contractible in $X$.
\end{enumerate}

We obtain the following inequality:

\begin{proposition}{\rm
Let $i_X:A\hookrightarrow X$ be a cofibration where $X$ is normal
and path-connected. Then, $\mbox{relcat}(i_X)\leq
\mbox{cat}^{FH}(X,A).$}
\end{proposition}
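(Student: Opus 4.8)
The plan is to show that a Fadell--Husseini cover of $(X,A)$ can be converted into a cover by relatively sectional open subsets, and then invoke Corollary \ref{carac} to conclude. Suppose $\mbox{cat}^{FH}(X,A)\leq n$, so $X$ admits an open cover $\{U_i\}_{i=0}^n$ where $U_0$ is relatively sectional and each $U_i$ with $i\geq 1$ is contractible in $X$. The open subset $U_0$ already satisfies Definition \ref{primordial}, so the only work is to replace each $U_i$ ($i\geq 1$) by a relatively sectional open subset, thereby producing a cover of the type required in part (ii) of Corollary \ref{carac}.

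First I would exploit path-connectedness together with contractibility in $X$: since $U_i$ is contractible in $X$, there is a homotopy $K_i:U_i\times I\to X$ with $K_i(x,0)=x$ and $K_i(x,1)=x_i$ a constant point of $X$. Because $X$ is path-connected and $A$ is nonempty, I can choose a path from $x_i$ to a point of $A$ and concatenate it with $K_i$ to obtain a homotopy deforming all of $U_i$ into $A$. This shows $A\cup U_i$ admits a homotopy pushing everything into $A$; but I must be more careful, since relative sectionality requires $A\subseteq U_i$ and requires the deformation to send points of $A$ back into $A$ throughout. The clean way is to enlarge $U_i$ to $U_i':=U_i\cup U_0$ (still open, still covering) and use the relatively sectional structure of $U_0$ to handle the $A$-part, so that on $U_0$ we deform via $H$ and on $U_i$ we deform via the concatenated nullhomotopy-into-$A$, matching them up.

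The main obstacle will be gluing these two partial homotopies on the overlap $U_0\cap U_i$ into a single well-defined continuous homotopy of pairs $(U_i'\times I, A\times I)\to(X,A)$, and ensuring the resulting map lands in $A$ at time $1$ while fixing no inadmissible behavior on $A$. To circumvent the overlap incompatibility, rather than gluing homotopies directly, I would work at the level of Corollary \ref{carac}(iv), which only demands $A\subseteq U_i$, $H_i(x,0)=x$, $H_i(x,1)\in A$, and $H_i(a,1)=a$. Using normality and a Urysohn function subordinate to the cover, I can reparametrize each nullhomotopy-into-$A$ exactly as in the first part of the proof of Theorem \ref{mainth} (the construction of $L_i$ from $H_i$ via $h_i$), which automatically produces the required boundary behavior and the ``$H_i(a,1)=a$'' condition after composing with the strong deformation retraction of a neighborhood of $A$ onto $A$ guaranteed by the cofibration $i_X$. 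Once every member of the cover is shown to be relatively sectional in the sense of Corollary \ref{carac}, that corollary yields $\mbox{relcat}(i_X)\leq n=\mbox{cat}^{FH}(X,A)$, completing the proof.
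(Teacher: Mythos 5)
Your overall strategy is the same as the paper's: keep $U_0$, use path-connectedness to turn the contraction of each $U_i$ ($i\geq 1$) into a deformation of $U_i$ into $A$, and then enlarge each $U_i$ by a neighbourhood of $A$ so that it becomes relatively sectional. You also correctly identify the crux, namely gluing the homotopy $H$ on the added neighbourhood of $A$ with the null-homotopy on $U_i$ over their overlap. However, your proposed circumvention does not work. The reparametrization $L_i(x,t)=H_i(x,t\,h_i(x))$ from the first part of the proof of Theorem \ref{mainth} only sends $x$ into $A$ at time $1$ when $h_i(x)=1$, i.e.\ on the shrunk closed set $A_i$; for the remaining points of your enlarged set the damped homotopy stops partway, so the condition ``$H_i(x,1)\in A$ for \emph{all} $x$ in the set'', which is what both (ii) and (iv) of Corollary \ref{carac} demand, fails. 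That damping is harmless in Theorem \ref{mainth} only because the target there is the fat wedge $T^n(i_X)$, where it suffices that for each $x$ \emph{some single} coordinate lands in $A$ and the sets $A_i$ cover $X$; it does not produce relatively sectional sets. Moreover, composing with the deformation retraction of a neighbourhood of $A$ cannot restore $H_i(a,1)=a$ when the null-homotopy collapses all of $U_i$ (including any points of $A$ it meets) to the single point $a_0$.

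The missing idea in the paper's proof is to make the overlap empty rather than to reconcile the homotopies on it: one first arranges $U_i\cap A=\emptyset$ for $i\geq 1$ (replace $U_i$ by $U_i\setminus A$, legitimate since $A$ is closed), shrinks each $U_i$ to $V_i$ with $\overline{V_i}\subseteq U_i$ using normality, and sets $\Omega:=U_0\cap\bigcap_{j\geq 1}(X\setminus\overline{V_j})$, an open neighbourhood of $A$ disjoint from every $V_i$. Then $U'_i:=\Omega\cup V_i$ is a disjoint union of two open sets, so defining the homotopy by $H$ on $\Omega$ and by the contraction-into-$A$ on $V_i$ is automatically continuous, and each $U'_i$ is genuinely relatively sectional; Corollary \ref{carac} then gives $\mbox{relcat}(i_X)\leq n$. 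Alternatively, your Urysohn idea can be salvaged by abandoning the goal of producing a relatively sectional cover and instead running the first half of the proof of Theorem \ref{mainth} directly on the Fadell--Husseini cover to build a map $f:X\to T^n(i_X)$ with $t_n\circ f\simeq\Delta_{n+1}$ and $f\circ i_X\simeq\tau_n$ (only the $0$-th coordinate needs to behave as a homotopy of pairs on $A$), concluding via Theorem \ref{DH}; but as written your argument does not close.
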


\begin{proof}
Suppose that $\mbox{cat}_{FH}(X,A)=n$ and consider
$\{U_i\}_{i=0}^n$ an open cover $X$ such that:
\begin{enumerate}
\item[(i)] $A\subseteq U_0$ and there exists a homotopy of pairs
$H:(U_0\times I,A\times I)\rightarrow (X,A)$ such that $H(x,0)=x$
and $H(x,1)\in A$ for all $x\in U_0.$
\item[(ii)] For all $i\geq 1,$ $U_i$ is contractible in $X$.
\end{enumerate}

As $X$ is path-connected we can take a homotopy $H_i:U_i\times
I\rightarrow X$ with $H_i(x,0)=0$ and $H_i(x,1)=a_0\in A$ for all
$x\in U_i$ and $i\geq 1.$ We can also suppose, without losing
generality, that $U_i\cap A=\emptyset $ for all $i\geq 1$
(otherwise we take the open cover $\{U_0,U_1\setminus
A,...,U_n\setminus A\}$ of $X$). By normality, we take an open
cover $\{V_i\}_{i=0}^n$ satisfying $V_i\subseteq
\overline{V_i}\subseteq U_i$ for all $i\geq 0.$ Then we can define
$$\Omega :=U_0\cap (X\setminus \overline{V_1})\cap...\cap
(X\setminus \overline{V_n}))$$ \noindent which is an open
neighborhood of $A$ such that $\Omega \cap V_i=\emptyset $ for all
$i\geq 1.$ If $U'_0:=U_0$ and $U'_i:=\Omega \cup V_i $ for $i\geq
1,$ then it is not difficult to check that $\{U'_i\}_{i=0}^n$ is
an open cover of $X.$ Moreover, for $i\geq 1$ the homotopy
$H'_i:(U'_i\times I,A\times I)\rightarrow (X,A)$ defined as
$$H'_i(x,t)=\begin{cases}H(x,t), & x\in \Omega \\
H_i(x,t), & x\in V_i
\end{cases}$$
\noindent satisfies $H'_i(x,0)=x$ and $H'_i(x,1)\in A.$ This fact
implies that each $U'_i$ is a relatively sectional open subset and
therefore $\mbox{relcat}(i_X)=\mbox{relcat}^{op}(i_X)\leq n.$
\end{proof}

\begin{remark}
Observe that, actually, the proof above also works when the pair
$(X,A)$ is 0-connected, instead of requiring path-connectedness
for $X.$
\end{remark}

Another interesting consequence is given by the nilpotence of the
relative cohomology.

\begin{proposition}{\rm
If $i_X:A\hookrightarrow X$ is a cofibration with $X$ a normal
space, then $\mbox{nil}\hspace{2pt}H^*(X,A)\leq
\mbox{relcat}(i_X).$}
\end{proposition}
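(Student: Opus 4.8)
The plan is to show that a nontrivial relative cohomology product $u_0 \smile u_1 \smile \cdots \smile u_n \neq 0$ in $H^*(X,A)$ obstructs the existence of a homotopy section in the sense that forces $\mathrm{relcat}(i_X) \geq n+1$; equivalently, if $\mathrm{relcat}(i_X) \leq n$ then any product of $n+1$ relative cohomology classes vanishes. The natural route is to exploit the open-cover characterization just established in Corollary \ref{carac}: since $X$ is normal, $\mathrm{relcat}(i_X) \leq n$ is equivalent to the existence of an open cover $U_0, U_1, \dots, U_n$ of $X$ with each $U_i$ relatively sectional, i.e.\ admitting a homotopy of pairs $H_i : (U_i \times I, A \times I) \to (X,A)$ deforming $U_i$ into $A$ rel the given data. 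Each such deformation says precisely that the inclusion of pairs $(U_i, A) \hookrightarrow (X,A)$ is homotopic, as a map of pairs, to a map landing in $(A,A)$, which makes the restriction $H^*(X,A) \to H^*(U_i, A)$ the zero map in positive degrees.

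First I would fix coefficients and recall the long exact sequence of the pair together with the relative cup product $H^*(X,A) \otimes H^*(X,B) \to H^*(X, A \cup B)$, which is the standard device turning a covering into a product of local vanishings. Given classes $u_i \in H^{*}(X,A)$ for $i = 0, \dots, n$, the key step is to produce for each $i$ a lift $\tilde u_i \in H^*(X, U_i)$ restricting to $u_i$ under $H^*(X,U_i) \to H^*(X,A)$ (using $A \subseteq U_i$); the obstruction to such a lift is exactly the statement that the composite $H^*(X,U_i) \to H^*(X,A) \to H^*(U_i,A)$-type map vanishes, which is guaranteed because the relatively sectional deformation $H_i$ trivializes the pair $(U_i,A)$ inside $(X,A)$. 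Once the lifts $\tilde u_i$ exist, their cup product lives in $H^*(X, U_0 \cup U_1 \cup \cdots \cup U_n) = H^*(X, X) = 0$, since the $U_i$ cover $X$; and by naturality of the relative product this product maps to $u_0 \smile \cdots \smile u_n$ in $H^*(X,A)$, forcing the latter to be zero.

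Concretely, I would carry out the steps in the order: (1) invoke Corollary \ref{carac} to replace the hypothesis $\mathrm{relcat}(i_X) = m \leq n$ by a relatively sectional open cover $\{U_i\}_{i=0}^{n}$; (2) for each $i$, use the homotopy of pairs $H_i$ to show the inclusion $(U_i,A) \hookrightarrow (X,A)$ factors up to homotopy through $(A,A)$, hence $j_i^* : H^+(X,A) \to H^+(U_i,A)$ is zero, and deduce from the exact sequence of the triple $(X, U_i, A)$ that each $u_i$ admits a lift $\tilde u_i \in H^*(X,U_i)$; (3) form $\tilde u_0 \smile \cdots \smile \tilde u_n \in H^*(X, \bigcup_i U_i) = H^*(X,X) = 0$ via the iterated relative cup product; (4) push down by naturality to conclude $u_0 \smile \cdots \smile u_n = 0$, i.e.\ $\mathrm{nil}\, H^*(X,A) \leq n$.

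The main obstacle I anticipate is step (2): making precise that a relatively sectional deformation yields a genuine lift of the cohomology class across the pair. One must be careful that $H_i$ is a homotopy of \emph{pairs} that keeps $A \times I$ inside $A$, so that it induces a homotopy of the inclusion of pairs $(U_i,A) \to (X,A)$ to a map whose image lies in $(A,A)$; the induced map on relative cohomology of the target is then zero in positive degrees, and exactness of the triple $(X,U_i,A)$ sequence $H^*(X,U_i) \to H^*(X,A) \to H^*(U_i,A)$ furnishes the desired lift. I would also need the cover to be by open sets (not merely the $A_i$ of the normality argument) so that the relative cup product and the identification $\bigcup_i U_i = X$ with $H^*(X,X)=0$ apply cleanly; the normality hypothesis and Corollary \ref{carac} together guarantee exactly this.
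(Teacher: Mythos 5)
Your proposal is correct and follows essentially the same route as the paper's own proof: the relatively sectional homotopies force $k_i^*:H^*(X,A)\to H^*(U_i,A)$ to vanish (the inclusion of pairs factors up to homotopy through $(A,A)$), the exact sequence of the triple $(X,U_i,A)$ then lifts each class to $H^*(X,U_i)$, and the relative cup product of the lifts lands in $H^*(X,\bigcup_i U_i)=H^*(X,X)=0$. The only cosmetic difference is your restriction to positive degrees in step (2), which is unnecessary since $H^*(A,A)=0$ in all degrees.
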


\begin{proof}
Suppose $\mbox{relcat}(i_X)=n$ and take $\{U_i\}_{i=0}^n$ a cover
of $X$ constituted by $n+1$ relatively sectional open subsets, and
consider $x_0,x_1,...,x_n\in H^*(X,A).$ For all $i$, if $x_i\in
H^{n_i}(X,A),$ also consider the following part of the long exact
sequence in cohomology of the triple $(X,U_i,A)$
$$...{\longrightarrow }H^{n_i}(X,U_i)\stackrel{q_i^*}{\longrightarrow }
H^{n_i}(X,A)\stackrel{k_i^*}{\longrightarrow }
H^{n_i}(U_i,A){\longrightarrow }...$$ \noindent where
$q_i:(X,A)\hookrightarrow (X,U_i)$ and $k_i:(U_i,A)\hookrightarrow
(X,A)$ denote the respective inclusions. Taking into account the
existence of homotopy (of pairs) commutative diagrams of the form
$$\xymatrix{
{(U_i,A)} \ar@{^{(}->}[rr]^{k_i} \ar[dr] & & {(X,A)} \\
 & {(A,A)} \ar@{^{(}->}[ur] &
}$$ \noindent we have that $k_i^*=0$ so each $q_i^*$ is surjective
and we can pick $\bar{x}_i\in H^{n_i}(X,U_i)$ such that
$q_i^*(\bar{x}_i)=x_i$ for all $i\geq 0.$ But then
$$x_0\cup x_1\cup ...\cup x_n=q_0^*(\bar{x}_0)\cup q_1^*(\bar{x}_1)\cup ...\cup q_n^*(\bar{x}_n))
=q^*(\bar{x}_0\cup \bar{x}_1\cup ...\cup \bar{x}_n)$$ \noindent
where $q:(X,A)\hookrightarrow (X,\bigcup _{i=0}^nU_i)$ denotes the
natural inclusion. But $\{U_i\}_{i=0}^n$ is a cover of $X$, so
$q^*=0$ and $x_0\cup x_1\cup ...\cup x_n=0.$
\end{proof}

\begin{remark}
We point out that
$$\mbox{nil}\hspace{2pt}H^*(X,A)=\mbox{nil}\hspace{2pt}H^*(X/A)=
\mbox{cuplength}\hspace{2pt}(X/A)$$ On the other hand, there
exists a similar lower bound for  $\mbox{secat}(i_X).$ Indeed, it
is well-known that, if $i_X^*:H^*(X)\rightarrow H^*(A)$ denotes
the homomorphism induced in cohomology, then
$\mbox{nil}\hspace{2pt}\mbox{ker}\hspace{2pt}(i_X^*)\leq
\mbox{secat}(i_X)$. It is natural to ask whether these two notions
$\mbox{nil}\hspace{2pt}\mbox{ker}\hspace{2pt}(i_X^*)$ and
$\mbox{nil}\hspace{2pt}H^*(X,A),$ are related or not. In
\cite[Th.21 (e)]{C-V} we found an answer to this question: If
$i_X$ has a homotopy retraction, then
$\mbox{nil}\hspace{2pt}\mbox{ker}\hspace{2pt}(i_X^*)=\mbox{cuplength}\hspace{2pt}(X/A)=
\mbox{nil}\hspace{2pt}H^*(X,A).$

\end{remark}

\section{General covers}

As we have previously commented, T. Srinivasan defined in
\cite{Sr,Sr2} the notion of generalized
Lus\-ter\-nik-Schni\-rel\-mann category of a space $X,$
$\mbox{cat}_g(X),$ as the least nonnegative integer $n$ (or
infinity) such that $X$ admits a cover constituted by $n+1$
subsets (not necessarily open) which are contractible in $X.$  It
is a numerical homotopy invariant which agrees with
$\mbox{cat}(X)$ when $X$ is an ANR. Our aim in this section is to
extend this notion to both sectional and relative categories.

\subsection{Generalized sectional category}

Recall that the sectional category of a fibration $p:E\rightarrow
B$, $\mbox{secat}(p),$ is defined as the least nonnegative integer
$n$ (or infinity) such that $B$ admits a cover constituted by
$n+1$ open subsets on each of which there exists a continuous
local section of $p$. This is a numerical homotopy invariant.
Requiring homotopy section instead of section permits one to
extend the notion of sectional category to any map. This is a
variant of Lusternik-Schnirelmann category (or L.S category, for
short) and also a generalization, since
$\mbox{secat}(p)=\mbox{cat}(B)$ when $E$ is a contractible space.
The sectional category was introduced by Schwarz \cite{Sch}, under
the name genus and then renamed by James \cite{J}.

We begin this subsection by establishing the notion of
\emph{generalized sectional category}.

\begin{definition}
Let $p:E\rightarrow B$ be a fibration. The \emph{generalized
sectional category of $p$}, denoted as $\mbox{secat}_g(p),$ is the
least nonnegative integer $n$ (or $\infty $) such that $B$ admits
a cover by $n+1$ subsets $A_0,...,A_n$ on each of which there
exists a continuous local section of $p$, that is, a commutative
diagram of the form
$$\xymatrix{
{A_i} \ar@{^(->}[rr] \ar[dr]_{s_i} & & B \\ & E \ar[ur]_p }$$
\end{definition}

We clearly have the inequality $\mbox{secat}_g(p)\leq
\mbox{secat}(p).$ In addition, it is possible to extend this
definition to any map $f:E\rightarrow B$, not necessarily a
fibration, by just requiring continuous local sections, up to
homotopy. When $f$ is a fibration both notions agree.

Generalized sectional category is a homotopy invariant in the next
sense. The proof of the result we are going to state is omitted as
it is just a simple repetition of the classical case.

\begin{proposition}\label{invariance}
Consider the following homotopy commutative diagram, where $\alpha
$ and $\beta $ are homotopy equivalences:
$$\xymatrix{
{X} \ar[r]^{\alpha }_{\simeq } \ar[d]_f & {X'} \ar[d]^{f'} \\
{Y} \ar[r]^{\simeq }_{\beta } & {Y'} }$$ Then
$\mbox{secat}_g(f)=\mbox{secat}_g(f').$
\end{proposition}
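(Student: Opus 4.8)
The plan is to show that a cover witnessing $\mbox{secat}_g(f')\leq n$ can be pulled back along the homotopy equivalences to a cover witnessing $\mbox{secat}_g(f)\leq n$, and then to conclude equality by symmetry. Since the statement is phrased for arbitrary maps (where local sections are required only up to homotopy), I will work with the general definition: a subset $A'\subseteq Y'$ admits a homotopy section of $f'$ if there is $s':A'\rightarrow X'$ with $f'\circ s'\simeq j'$, where $j':A'\hookrightarrow Y'$ is the inclusion. First I would suppose $\mbox{secat}_g(f')\leq n$ and fix a cover $\{A'_i\}_{i=0}^n$ of $Y'$ together with homotopy sections $s'_i:A'_i\rightarrow X'$.

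Next I would transport this data across the square. Let $\beta^{-1}$ be a homotopy inverse of $\beta$ and set $A_i:=\beta^{-1}(A'_i)$; because $\beta$ is a homeomorphism onto its image in the weak sense only up to homotopy, I must instead define $A_i$ as the preimage $A_i:=\beta^{-1}(A'_i)$ where I genuinely use a strict homotopy inverse as a map $Y'\rightarrow Y$. The cleaner route is to define $A_i:=(\beta)^{-1}(A'_i)$ using the honest continuous map $\beta:Y\rightarrow Y'$, i.e. $A_i:=\beta^{-1}(A'_i)\subseteq Y$. Then $\{A_i\}$ covers $Y$ since $\{A'_i\}$ covers $Y'$ and $\beta$ is a map into $Y'$. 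On each $A_i$ I would build a homotopy section of $f$ by composing: let $\alpha^{-1}:X'\rightarrow X$ be a homotopy inverse of $\alpha$ and set $s_i:=\alpha^{-1}\circ s'_i\circ(\beta|_{A_i}):A_i\rightarrow X$. The verification that $f\circ s_i\simeq j_i$ (the inclusion $A_i\hookrightarrow Y$) then reduces to a chain of homotopies: using $\beta\circ f\simeq f'\circ\alpha$ from the square, together with $f'\circ s'_i\simeq j'_i$ and the cancellation homotopies $\alpha\circ\alpha^{-1}\simeq 1_{X'}$ and $\beta^{-1}\circ\beta\simeq 1_Y$.

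The main obstacle will be the bookkeeping of these homotopies, and in particular making the diagram chase produce a homotopy whose source and target are genuinely $f\circ s_i$ and $j_i$ rather than something only related to them after further composition with $\beta$. Concretely, the natural computation gives $\beta\circ f\circ s_i\simeq j'_i\circ\beta|_{A_i}=\beta\circ j_i$, i.e. equality holds after postcomposing with $\beta$; to strip off the $\beta$ I would use that $\beta$ is a homotopy equivalence, so postcomposition $\beta_*$ induces a bijection on homotopy classes of maps $A_i\rightarrow Y'$, whence $f\circ s_i\simeq j_i$ directly. This is the one place where the homotopy-equivalence hypothesis on $\beta$ (and not mere continuity) is essential, and it is exactly the step the proposition's author has in mind when calling this ``a simple repetition of the classical case.''

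Having produced the cover $\{A_i\}_{i=0}^n$ of $Y$ with homotopy sections $s_i$, I conclude $\mbox{secat}_g(f)\leq n$, hence $\mbox{secat}_g(f)\leq\mbox{secat}_g(f')$. The reverse inequality follows by running the identical argument on the homotopy commutative square obtained by replacing $\alpha,\beta$ with homotopy inverses $\alpha^{-1},\beta^{-1}$, which still commutes up to homotopy and still has homotopy equivalences as horizontal maps. Combining the two inequalities yields $\mbox{secat}_g(f)=\mbox{secat}_g(f')$. I expect no difficulty beyond the homotopy bookkeeping described above; no additional topological hypotheses (normality, ANR, etc.) are needed, since at this level of generality one works entirely with homotopy sections and the argument is purely formal.
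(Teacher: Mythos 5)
Your proof is correct and is exactly the ``simple repetition of the classical case'' that the paper invokes when omitting its own proof: pull the cover back along $\beta$, transport each homotopy section via $\alpha$ and a homotopy inverse, and use that postcomposition with the homotopy equivalence $\beta$ is a bijection on homotopy classes $[A_i,Y]\to[A_i,Y']$ to strip off the $\beta$. The only blemish is the momentary detour about defining $A_i$ via a homotopy inverse of $\beta$ before settling (correctly) on the honest preimage $A_i=\beta^{-1}(A'_i)$, which guarantees both that $\{A_i\}$ covers $Y$ and that $\beta$ restricts to a map $A_i\to A'_i$.
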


The first example of generalized sectional category is the
generalized L.-S. category.  As in the classical case, in order to
establish $\mbox{cat}_g(X)$ when $X$ is a path-connected space
with base point $x_0\in X$, we can suppose that every subset $A_i$
is contractible in $X$ to the point $x_0.$ This means that the
inclusion map $A_i\hookrightarrow X$ is homotopic to the constant
map at $x_0.$ Therefore, whenever we have a path-connected pointed
space $X$, $\mbox{cat}_g(X)$ can be defined as
$$\mbox{cat}_g(X)=\mbox{secat}_g(*\stackrel{x_0}{\longrightarrow }X)$$
\noindent where $x_0:*\rightarrow X$ denotes the map carrying $*$
to $x_0.$ In particular, taking the path fibration
$p:P(X)\rightarrow X$, where $P(X)=\{\alpha \in X^I:\alpha
(0)=x_0\}$ and $p(\alpha )=\alpha (1)$, we have by Proposition
\ref{invariance} that
$$\mbox{cat}_g(X)=\mbox{secat}_g(p:P(X)\rightarrow X)$$

A basic property, relating the generalized versions of sectional
category with L.-S. category, is given in the next proposition.
Observe that in any homotopy commutative diagram of the form
$$\xymatrix{
{E} \ar[rr]^{\alpha } \ar[dr]_f & & {E'} \ar[dl]^{f'} \\  & B &}$$
\noindent we have the inequality $\mbox{secat}_g(f')\leq
\mbox{secat}_g(f).$ Indeed, if $A$ is a subset of $B$ together
with $s:A\rightarrow E$ a local homotopy section of $f$, then, we
clearly have that $s':=\alpha \circ s:A\rightarrow E'$ is a local
homotopy section of $f'.$

\begin{proposition}\label{desig}
Let $f:X\rightarrow Y$ be a map where $Y$ is a path-connected
based space. Then $\mbox{secat}_g(f)\leq \mbox{cat}_g(Y).$
Moreover, if $X$ is contractible, then $\mbox{secat}_g(f)=
\mbox{cat}_g(Y).$
\end{proposition}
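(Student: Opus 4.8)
The plan is to prove the inequality $\mbox{secat}_g(f)\leq \mbox{cat}_g(Y)$ by reducing to the generalized L.-S.\ category of $Y$ via a suitable auxiliary map, and then to handle the contractible case by a matching reverse inequality. For the first claim I would choose a basepoint $y_0\in Y$ and consider the constant map $c:X\rightarrow Y$ sending everything to $y_0$, together with the map $y_0:*\rightarrow Y$. Since $Y$ is path-connected and based, we already know from the discussion above that $\mbox{cat}_g(Y)=\mbox{secat}_g(*\stackrel{y_0}{\longrightarrow}Y)$. The key observation is that any subset $A\subseteq Y$ which is contractible in $Y$ to $y_0$ carries a local homotopy section of $f$: if $A$ is contractible in $Y$, then the inclusion $A\hookrightarrow Y$ is homotopic to the constant map at $y_0$, and composing a constant map $A\rightarrow X$ (landing at any preimage-free point, or simply picking any point of $X$) with $f$ yields a map homotopic, through that contracting homotopy, to the inclusion.

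More precisely, the first step is this: given a cover $A_0,\dots,A_n$ of $Y$ witnessing $\mbox{cat}_g(Y)\leq n$, where each $A_i$ is contractible in $Y$ to $y_0$, I would define for each $i$ a local homotopy section $s_i:A_i\rightarrow X$ of $f$ by setting $s_i$ to be the constant map at some fixed point $x_1\in X$. Then $f\circ s_i$ is the constant map at $f(x_1)$. Since $Y$ is path-connected, $f(x_1)$ and $y_0$ are joined by a path, and the inclusion $A_i\hookrightarrow Y$ is homotopic to the constant map at $y_0$, hence homotopic to the constant map at $f(x_1)=f\circ s_i$. Thus each $s_i$ is a local homotopy section of $f$ over $A_i$, and the same cover witnesses $\mbox{secat}_g(f)\leq n$. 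This gives $\mbox{secat}_g(f)\leq\mbox{cat}_g(Y)$.

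For the reverse inequality when $X$ is contractible, I would use the factorization idea already present in the excerpt. The point is that there is a homotopy commutative triangle comparing $f:X\rightarrow Y$ with the map $y_0:*\rightarrow Y$: since $X$ is contractible, $X\simeq *$, and $f$ is homotopic (over $Y$, up to the comparison maps) to the composite through the basepoint. Invoking Proposition \ref{invariance} and the monotonicity remark preceding the statement (that $\mbox{secat}_g(f')\leq\mbox{secat}_g(f)$ in a homotopy commutative triangle over $B$), I would deduce $\mbox{secat}_g(f)=\mbox{secat}_g(*\stackrel{y_0}{\longrightarrow}Y)=\mbox{cat}_g(Y)$. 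Concretely, a local homotopy section of $f$ over $A_i$ composed with $f$ gives a map $A_i\rightarrow Y$ homotopic to the inclusion and factoring through the contractible $X\simeq *$, which exhibits $A_i$ as contractible in $Y$; this yields $\mbox{cat}_g(Y)\leq\mbox{secat}_g(f)$, and combined with the general inequality gives equality.

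The main obstacle I anticipate is purely bookkeeping rather than conceptual: one must be careful that ``contractible in $X$ to the point $x_0$'' and ``admits a local homotopy section'' are matched up correctly through the path-connectedness of $Y$, so that the homotopies landing at different points ($f(x_1)$ versus $y_0$) are reconciled. Path-connectedness of $Y$ is exactly what is needed to concatenate the contracting homotopy with a path from $y_0$ to $f(x_1)$. I would verify that no openness of the $A_i$ is ever used (it is not, since $\mbox{secat}_g$ and $\mbox{cat}_g$ both allow arbitrary covers), so the argument is a verbatim adaptation of the classical $\mbox{secat}(f)\leq\mbox{cat}(Y)$ proof with ``open'' deleted and ``section'' replaced by ``homotopy section.''
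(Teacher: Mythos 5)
Your proof is correct and takes essentially the same route as the paper: the paper applies its preceding monotonicity observation to the homotopy-commutative triangle $*\rightarrow X\rightarrow Y$ over $Y$ (commutative up to homotopy by path-connectedness) and then invokes Proposition~\ref{invariance} when $X$ is contractible, which is precisely what your explicit constant local sections and the factorization through $X\simeq *$ unwind. No gaps.
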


\begin{proof}
By the path-connectedness of $Y$, the following diagram is
commutative, up to homotopy:
$$\xymatrix{ {*} \ar[rr] \ar[dr]_{y_0} & & {X} \ar[dl]^{f} \\
& Y &}$$ \noindent so we can apply the argument above. Also
observe that whenever $X$ is contractible, $*\rightarrow X$ is a
homotopy equivalence so we can apply Proposition \ref{invariance}.
\end{proof}

Another interesting example of generalized sectional category is
the \emph{generalized topological complexity} of a space $X$. It
is simply defined as
$$\mbox{TC}_g(X):=\mbox{secat}_g(\pi _X:X^I\rightarrow X\times X)$$
\noindent where $\pi _X$ denotes the path fibration, given as $\pi
_X(\alpha )=(\alpha (0),\alpha (1)).$ Again, as $\pi _X$ is the
fibration that approximates the diagonal map $\Delta
_X:X\rightarrow X\times X,$ by Proposition \ref{invariance}, we
have $\mbox{TC}_g(X)=\mbox{secat}_g(\Delta _X:X\rightarrow X\times
X).$ Also note by the same proposition that $\mbox{TC}_g$ is a
homotopy invariant. Moreover:

\begin{proposition}
Let $X$ be a path-connected space. Then
$$\mbox{cat}_g(X)\leq \mbox{TC}_g(X)\leq \mbox{cat}_g(X\times X)$$
\end{proposition}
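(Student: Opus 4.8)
The plan is to mimic the classical proof that $\mathrm{cat}(X)\le \mathrm{TC}(X)\le \mathrm{cat}(X\times X)$, but working throughout with generalized (not necessarily open) covers and local \emph{homotopy} sections. I would use the identification $\mathrm{TC}_g(X)=\mathrm{secat}_g(\Delta_X:X\rightarrow X\times X)$ established just above, so that both inequalities become comparisons of generalized sectional categories of maps into $X\times X$, where Proposition \ref{desig} and the ``homotopy-factorization'' inequality preceding it are the main tools.

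For the right-hand inequality $\mathrm{TC}_g(X)\le \mathrm{cat}_g(X\times X)$, I would simply invoke Proposition \ref{desig} applied to the map $\Delta_X:X\rightarrow X\times X$: since $X$ is path-connected, $X\times X$ is a path-connected based space, so $\mathrm{secat}_g(\Delta_X)\le \mathrm{cat}_g(X\times X)$, and the left side equals $\mathrm{TC}_g(X)$. This step is essentially immediate once the correct fibration/diagonal reformulation is in place.

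For the left-hand inequality $\mathrm{cat}_g(X)\le \mathrm{TC}_g(X)$, I would exhibit a homotopy commutative triangle over $X$ that lets me apply the factorization inequality ``$\mathrm{secat}_g(f')\le\mathrm{secat}_g(f)$'' stated before Proposition \ref{desig}. The idea is to fix a basepoint $x_0\in X$ and consider the inclusion $j:X\cong\{x_0\}\times X\hookrightarrow X\times X$; composing a local homotopy section of $\pi_X$ (equivalently of $\Delta_X$) with the second-coordinate projection, or restricting to the slice $\{x_0\}\times X$, should convert a cover of $X\times X$ by sets carrying local homotopy sections of the diagonal into a cover of $X$ by sets that are contractible in $X$. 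Concretely, a local homotopy section over $A\subseteq X\times X$ together with a path from $(x_0,x)$ to the diagonal yields a contraction in $X$ of the corresponding slice, so pulling back the cover along $x\mapsto(x_0,x)$ produces a generalized categorical cover of $X$ of the same cardinality.

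The main obstacle, and the step deserving care, is precisely this last passage from a local \emph{homotopy} section over an arbitrary (possibly non-open) subset $A\subseteq X\times X$ to an explicit nullhomotopy in $X$ of the slice $\{x\in X : (x_0,x)\in A\}$. In the classical open case one uses the path provided by the section to contract to $x_0$; here I must check that the same construction goes through purely homotopy-theoretically, without any openness or local-triviality assumption, so that the resulting subsets are genuinely contractible in $X$ and the count of covering sets is preserved. I expect this to follow by direct manipulation of the evaluation maps $\alpha\mapsto\alpha(0),\alpha(1)$ on the path space, but it is where the ``generalized'' hypothesis must be handled honestly rather than quoted from the open-cover theory.
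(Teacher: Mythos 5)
Your proposal is correct and follows essentially the same route as the paper: the right-hand inequality is exactly an application of Proposition \ref{desig} to $\Delta _X$, and for the left-hand inequality the paper simply invokes the classical Farber argument, which is precisely the slice construction you describe. The step you flag as delicate goes through without change, since the contraction $H_i(x,t)=s_i(x_0,x)(t)$ of the slice $\{x\in X:(x_0,x)\in A_i\}$ uses only the continuity of the evaluation map $X^I\times I\rightarrow X$ and never the openness of the covering sets.
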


\begin{proof}
By Proposition \ref{desig}, $\mbox{TC}_g(X)\leq
\mbox{cat}_g(X\times X)$ so it only remains to prove the
inequality $\mbox{cat}_g(X)\leq \mbox{TC}_g(X).$ In order to prove
this one just has to follow the same proof as in the classical
case (see \cite{F}).
\end{proof}

Now we prove that generalized sectional category agrees with
sectional category of a fibration provided we are dealing with
absolute neighborhood retracts (for metric spaces). Recall that an
absolute neighborhood retract, or ANR, is a metric space $Y$ such
that for any metric space $X$ and closed subset $A$, any
continuous map $f:A\rightarrow Y$ has an extension
$\tilde{f}:U\rightarrow Y$ for some neighborhood $U$ of
$A\subseteq X.$

In order to fulfill our purposes we first recall two crucial
results about ANR spaces. Such results are also used in \cite{Sr,
Sr2}. Given $\mathcal{U}=\{U_{\lambda }\}_{\lambda \in \Lambda }$
a cover of a space $Y$, we say that two maps $f,g:X\rightarrow Y$
are \emph{$\mathcal{U}$-close}, if for any $x\in X$ there exists
$\lambda \in \Lambda $ such that $f(x)$ and $g(x)$ belong to
$U_{\lambda }.$

\begin{lemma}\cite{Hu}\label{epsilon}
Let $Y$ be an ANR space. Then there exists a function (not
necessarily continuous) $\varepsilon :Y\rightarrow (0,+\infty )$
such that the cover
$$\mathcal{V}=\{B(y,\varepsilon (y)):y\in Y\}$$ \noindent
satisfies that any pair of  $\mathcal{V}$-close maps
$f,g:X\rightarrow Y$ are homotopic.
\end{lemma}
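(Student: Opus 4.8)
The plan is to exploit the classical characterization of an ANR as a neighbourhood retract after an isometric embedding into a normed linear space, and then to produce the homotopy between any two $\mathcal{V}$-close maps by pushing the straight-line homotopy back through the retraction. This is the argument underlying \cite{Hu}, so what follows is really a recollection of that standard proof.

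First I would embed $Y$ isometrically as a closed subspace of a normed linear space $L$; every metric space admits such an embedding (for instance via the standard Kuratowski embedding into the Banach space of bounded continuous real functions on $Y$). Since $Y$ is an ANR, the neighbourhood extension property applied to the identity map $1_Y\colon Y\to Y$, viewed on the closed subset $Y\subseteq L$, yields a retraction $r\colon W\to Y$ defined on some open neighbourhood $W$ of $Y$ in $L$. Next I would, for each $y\in Y$, choose $\varepsilon(y)>0$ so small that the open ball $B_L(y,\varepsilon(y))$ of $L$ is entirely contained in $W$; this is possible precisely because $W$ is open and contains $y$. This defines the (generally discontinuous) function $\varepsilon\colon Y\to(0,+\infty)$ together with the cover $\mathcal{V}=\{B(y,\varepsilon(y)):y\in Y\}$, where $B(y,\varepsilon(y))=B_L(y,\varepsilon(y))\cap Y$.

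Now suppose $f,g\colon X\to Y$ are $\mathcal{V}$-close. Then for each $x\in X$ there is some $y\in Y$ with $f(x),g(x)\in B(y,\varepsilon(y))$, so both $f(x)$ and $g(x)$ lie in $B_L(y,\varepsilon(y))$. Because balls in a normed space are convex, the whole segment $\{(1-t)f(x)+tg(x):t\in I\}$ remains inside $B_L(y,\varepsilon(y))\subseteq W$. Hence the straight-line homotopy $K\colon X\times I\to W$ given by $K(x,t)=(1-t)f(x)+tg(x)$ is well defined and continuous, and $H:=r\circ K\colon X\times I\to Y$ is a homotopy from $r\circ f=f$ to $r\circ g=g$, since $r$ restricts to the identity on $Y$. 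This proves that $f$ and $g$ are homotopic, as required.

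The main obstacle is the embedding-plus-retraction step, that is, passing from the abstract extension property defining an ANR to a concrete open neighbourhood $W$ of $Y$ in an ambient normed linear space carrying a retraction $r$; this is exactly the content of the neighbourhood-retract characterization of ANRs and is where \cite{Hu} is invoked. Once $W$ and $r$ are in hand, the choice of $\varepsilon$ and the convexity argument are routine, the only point requiring care being that the common ball $B(y,\varepsilon(y))$ witnessing closeness is measured in the metric inherited from $L$, so that the convexity of $B_L(y,\varepsilon(y))$ may be applied directly to the segment joining $f(x)$ and $g(x)$.
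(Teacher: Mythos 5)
The paper gives no proof of this lemma, deferring entirely to \cite{Hu}, and your argument is exactly the standard one from that reference: embed $Y$ as a closed subset of a normed linear space $L$, retract an open neighbourhood $W\subseteq L$ onto $Y$ via the ANR property applied to $1_Y$, choose $\varepsilon(y)$ so that $B_L(y,\varepsilon(y))\subseteq W$, and push the straight-line homotopy through the retraction; this is correct. The one slip is the parenthetical justification of the embedding step: the Kuratowski embedding need not have closed image unless $Y$ is complete, so to legitimately apply the neighbourhood extension property to $1_Y$ one should instead invoke the Arens--Eells theorem (every metric space embeds isometrically as a \emph{closed} subset of a normed linear space), after which the rest of your argument goes through verbatim.
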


The other result we need is a version of a lemma that appears in
\cite{W}. Observe that our statement is slightly different to the
one given in T. Srinivasan's work. However, its proof only needs
very small straightforward modifications of the one given in
\cite{Sr2}. This way it actually consists of an almost identical
argument so we have decided to omit it.


\begin{lemma}\label{walsh}
Let $X$ be a metric space, $A\subseteq X$, $K$ an ANR space and
$f:A\rightarrow K$ a map. Then, for any function $\varepsilon
:K\rightarrow (0,+\infty )$ (not necessarily continuous) there
exist an open neighborhood  $U\supseteq A$ in $X$ and a map
$g:U\rightarrow K$ such that:
\begin{enumerate}
\item[(i)] For all $u\in U$ there is $a_u\in A$ satisfying
$$\max \{d(g(u),f(a_u)),d(u,a_u))\}<\varepsilon (f(a_u))$$

\item[(ii)] $g_{|A}\simeq f.$
\end{enumerate}
\end{lemma}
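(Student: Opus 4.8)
The plan is to reduce the statement to a controlled, Dugundji-type extension into a normed space followed by a retraction. First I would realize $K$ as a closed isometric subspace of a normed linear space $L$ (for instance via the Arens--Eells embedding), so that the metric $d$ on $K$ coincides with the norm distance. Applying the defining extension property of the ANR $K$ to the closed inclusion $K\hookrightarrow L$ (here $L$ plays the role of the ambient metric space and $K$ that of the closed subset) and to the identity $\mathrm{id}_K$, I obtain an open neighborhood $W$ of $K$ in $L$ together with a retraction $r:W\to K$ with $r|_K=\mathrm{id}_K$. The idea is then to build a map $\hat f$ from a neighborhood of $A$ into $W$ that stays close to the values of $f$, and to set $g:=r\circ\hat f$.

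Next I would fix, for each $a\in A$, two radii encoding the required control. Using continuity of $r$ at $f(a)$ together with $r(f(a))=f(a)$, I choose $\rho(a)\in(0,\varepsilon(f(a))]$ with $B_L(f(a),\rho(a))\subseteq W$ and $r\big(B_L(f(a),\rho(a))\big)\subseteq B_K(f(a),\varepsilon(f(a)))$; using continuity of $f$ at $a$, I choose $\delta(a)\in(0,\varepsilon(f(a)))$ so that $a'\in A$ with $d(a,a')<\delta(a)$ forces $\|f(a')-f(a)\|<\rho(a)/2$. Put $U:=\bigcup_{a\in A}B_X(a,\delta(a)/2)$, an open neighborhood of $A$. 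Since metric spaces are paracompact, the cover $\{B_X(a,\delta(a)/2)\}_{a\in A}$ of $U$ admits a locally finite open refinement $\{V_\lambda\}$ with a subordinate partition of unity $\{\varphi_\lambda\}$; for each $\lambda$ I fix a center $a_\lambda\in A$ with $V_\lambda\subseteq B_X(a_\lambda,\delta(a_\lambda)/2)$, and define $\hat f(u):=\sum_\lambda \varphi_\lambda(u)\,f(a_\lambda)\in L$.

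The main obstacle is that $\varepsilon$, and hence $\rho$ and $\delta$, need not be continuous, so there is no uniform scale to work with; the point of clause (i) is precisely that it only demands the \emph{existence} of a good anchor $a_u$. I would resolve this by choosing, at each $u\in U$, the anchor $a_u$ to be an active center (one with $\varphi_\lambda(u)>0$) of largest $\delta$-value, which exists by local finiteness. The half-radius convention then makes overlaps nest: if $a_\lambda$ is any active center at $u$, then $d(a_\lambda,a_u)<\delta(a_\lambda)/2+\delta(a_u)/2\le\delta(a_u)$, whence $\|f(a_\lambda)-f(a_u)\|<\rho(a_u)/2$. As balls in $L$ are convex, $\hat f(u)$ lies in $B_L(f(a_u),\rho(a_u))\subseteq W$, so $g=r\circ\hat f:U\to K$ is well defined and, by the choice of $\rho$, satisfies $d(g(u),f(a_u))<\varepsilon(f(a_u))$; since $d(u,a_u)<\delta(a_u)/2<\varepsilon(f(a_u))$ as well, clause (i) follows.

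Finally, for clause (ii) I would observe that, for each $a\in A$, both $f(a)\in K$ and $\hat f(a)$ lie within $\rho(a_a)/2$ of $f(a_a)$ for the anchor $a_a$ chosen above, so the whole segment between them stays inside the convex ball $B_L(f(a_a),\rho(a_a))\subseteq W$. Hence the formula $H(a,t):=r\big((1-t)f(a)+t\,\hat f(a)\big)$ defines a continuous homotopy from $f=r\circ f$ to $g|_A=r\circ\hat f|_A$, giving $g|_A\simeq f$. The only genuinely delicate step is the anchor-selection together with the half-radius nesting, which is exactly what tames the discontinuity of $\varepsilon$; everything else is a routine partition-of-unity verification.
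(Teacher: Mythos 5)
Your proof is correct; the paper itself omits the argument for this lemma, deferring to Srinivasan \cite{Sr2} and ultimately Walsh \cite{W}, and your route --- Arens--Eells embedding of $K$ as a closed subset of a normed space $L$, a neighborhood retraction $r:W\to K$, a controlled partition-of-unity average $\hat f(u)=\sum_\lambda\varphi_\lambda(u)f(a_\lambda)$, and $g:=r\circ\hat f$ --- is precisely the standard argument behind those references, here written out in full. The one genuinely delicate point, taming the discontinuity of $\varepsilon$ by anchoring each $u$ at an active center of maximal $\delta$-value so that the half-radius convention places every active value $f(a_\lambda)$ in the single convex ball $B_L(f(a_u),\rho(a_u)/2)\subseteq W$, is handled correctly, and the same ball argument both yields estimate (i) after applying $r$ and keeps the straight-line homotopy in (ii) inside the domain of the retraction, so I see no gap.
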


Now we are in conditions to state and prove one of the main
results of this section.

\begin{theorem}\label{chulo}
Let $p:E\rightarrow B$ be a fibration where $E$ and $B$ are ANR
spaces. Then $\mbox{secat}_g(p)=\mbox{secat}(p).$
\end{theorem}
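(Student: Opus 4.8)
The plan is to prove the nontrivial inequality $\mbox{secat}(p)\leq \mbox{secat}_g(p)$, since the reverse inequality $\mbox{secat}_g(p)\leq \mbox{secat}(p)$ is immediate from the definitions (every open cover with local sections is in particular a general cover with local sections). So I would assume $\mbox{secat}_g(p)\leq n$ and produce a genuine \emph{open} cover of $B$ by $n+1$ sets each admitting a local section of $p$. Start by fixing a general cover $A_0,\dots,A_n$ of $B$ together with continuous local sections $s_i:A_i\to E$, i.e. $p\circ s_i=\mbox{incl}_{A_i}$. The whole point will be to fatten each $A_i$ to an open neighborhood $U_i\supseteq A_i$ carrying a local section, while keeping the $U_i$ a cover.

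The key step is to apply Lemma \ref{walsh} (the Walsh-type extension lemma) to each $s_i$, using the $\varepsilon$-function supplied by Lemma \ref{epsilon} applied to the ANR space $E$. Concretely, for the map $s_i:A_i\to E$ I would take the function $\varepsilon:E\to(0,+\infty)$ from Lemma \ref{epsilon} and obtain from Lemma \ref{walsh} an open neighborhood $U_i\supseteq A_i$ in $B$ and a map $\sigma_i:U_i\to E$ with $\sigma_i|_{A_i}\simeq s_i$ and satisfying the metric closeness condition (i). The closeness estimate is exactly what is designed to feed into Lemma \ref{epsilon}: I want to compare the candidate section $p\circ\sigma_i:U_i\to B$ with the inclusion $U_i\hookrightarrow B$, and show these two maps into $B$ are $\mathcal{V}$-close for the cover $\mathcal{V}$ coming from an $\varepsilon$-function on $B$. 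Since $p$ is continuous, the bound on $d(\sigma_i(u),s_i(a_u))$ in $E$ translates into control of $d(p\sigma_i(u),a_u)$ in $B$, and the bound $d(u,a_u)$ controls $d(u,a_u)$ directly; choosing $\varepsilon$ on $E$ small enough relative to a given $\varepsilon$-function on $B$ (for instance by pulling back through $p$ and using uniform-continuity-type control on small balls) forces $p\circ\sigma_i$ and the inclusion to be $\mathcal{V}$-close, hence homotopic by Lemma \ref{epsilon}. Thus $p\circ\sigma_i\simeq \mbox{incl}_{U_i}$.

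At this point $\sigma_i:U_i\to E$ is a \emph{homotopy} section of $p$ over $U_i$, not an honest section. Here I would invoke the fibration property of $p$: given a homotopy $p\circ\sigma_i\simeq \mbox{incl}_{U_i}$, the homotopy lifting property allows me to replace $\sigma_i$ by a strict local section $s_i':U_i\to E$ with $p\circ s_i'=\mbox{incl}_{U_i}$ (lift the homotopy starting at $\sigma_i$ and take the endpoint). Finally, since each $A_i\subseteq U_i$ and the $A_i$ already cover $B$, the open sets $U_0,\dots,U_n$ form an open cover of $B$ with local sections, giving $\mbox{secat}(p)\leq n$ and completing the proof.

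The main obstacle I anticipate is the calibration step: choosing the $\varepsilon$-function on $E$ in Lemma \ref{epsilon} and relating it, through the continuous map $p$, to an $\varepsilon$-function on $B$ so that condition (i) of Lemma \ref{walsh} genuinely forces $\mathcal{V}$-closeness in $B$ rather than merely in $E$. The delicate point is that $\varepsilon$ need not be continuous, so I cannot use naive uniform continuity of $p$; instead I expect to argue pointwise, using that the reference point $a_u\in A_i$ is where both the source point $u$ and the target section value land close, and that $p(s_i(a_u))=a_u$, so that $p\sigma_i(u)$, $a_u$, and $u$ all sit in a controlled small ball around $a_u$ in $B$. Getting this comparison clean enough to apply Lemma \ref{epsilon} on $B$ is where the real work lies; everything after that (the fibration lift and assembling the cover) is routine.
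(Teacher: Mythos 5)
Your proposal is correct and follows essentially the same route as the paper: reduce to $\mbox{secat}(p)\leq\mbox{secat}_g(p)$, fatten each $A_i$ to an open $U_i$ via Lemma \ref{walsh} applied to $s_i:A_i\to E$, show $p\circ\sigma_i$ is $\mathcal{V}$-close to the inclusion for the cover of $B$ from Lemma \ref{epsilon}, and then use the homotopy lifting property to upgrade to a strict section. The calibration step you flag is resolved exactly as you sketch it, pointwise: the paper sets $\bar{\varepsilon}(e)=\min\{\delta(e),\varepsilon(p(e))\}$ where $\delta(e)$ comes from continuity of $p$ at $e$, and then both $u$ and $p(\sigma_i(u))$ land in $B(a_u,\varepsilon(a_u))$ because $p(s_i(a_u))=a_u$.
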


\begin{proof}
It only remains to check the inequality $\mbox{secat}(p)\leq
\mbox{secat}_g(p)$. As $B$ is an ANR, by Lemma \ref{epsilon} there
exists a function $\varepsilon :B\rightarrow (0,+\infty )$ such
that the cover $\mathcal{W}=\{B(b,\varepsilon (b)):b\in B\}$ of
$B$ satisfies that any pair of $\mathcal{W}$-close maps
$f,g:Z\rightarrow B$ are homotopic. By continuity of
$p:E\rightarrow B$, for each $e\in E$ pick $\delta (e)>0$ such
that $p(B(e,\delta (e)))\subseteq B(p(e),\varepsilon (p(e))).$
Then we define the funcion $\bar{\varepsilon }:E\rightarrow
(0,+\infty )$ as $\bar{\varepsilon }(e)=\min \{\delta
(e),\varepsilon (p(e))\}.$  Now suppose we have a subset
$A\subseteq B$ together with a strict local section
$s:A\rightarrow E$ of $p:$
$$\xymatrix{
{A} \ar@{^(->}[rr]^{inc_A} \ar[dr]_{s} & & B \\ & E \ar[ur]_p }$$
By Lemma \ref{walsh} there exist an open neighborhood $U\supseteq
A$ in $B$ and a map $s':U\rightarrow E$ satisfying that
$s'_{|A}\simeq s$ and for all $u\in U$ there exists $a_u\in A$
such that $\max \{d(s'(u),s(a_u)),d(u,a_u))\}<\bar{\varepsilon
}(s(a_u))$. If $inc_U:U\hookrightarrow B$ denotes the canonical
inclusion, then we want to prove that $p\circ s'$ and
$inc_U:U\rightarrow B$ are $\mathcal{W}$-close. Indeed, given
$u\in U$ we have that $d(s'(u),s(a_u))<\bar{\varepsilon
}(s(a_u))\leq \delta (s(a_u))$; in other words, $s'(u)\in
B(s(a_u),\delta (s(a_u)))$, so that
$$p(s'(u))\in p(B(s(a_u),\delta (s(a_u))))\subseteq B(a_u,\varepsilon (a_u))$$
\noindent that is, $p(s'(u))\in B(a_u,\varepsilon (a_u))$.
Moreover, $d(u,a_u)<\bar{\varepsilon }(s(a_u))\leq \varepsilon
(p(s(a_u)))=\varepsilon (a_u),$ so $u\in B(a_u,\varepsilon
(a_u)).$ This proves that $p\circ s'$ and $inc_U:U\rightarrow B$
are $\mathcal{W}$-close and therefore $p\circ s'\simeq inc_U.$
Finally, as $p$ is a fibration we can obtain a map
$s'':U\rightarrow E$ such that $p\circ s''=inc_U.$ Using this
argument for covers we conclude the proof.
\end{proof}

As a consequence of this theorem we obtain the next results:

\begin{corollary}
Let $X$ be a space having the homotopy type of a CW-complex
(equivalently, of an ANR). Then $\mbox{TC}(X)=\mbox{TC}_g(X).$
\end{corollary}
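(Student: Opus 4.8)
The plan is to apply Theorem \ref{chulo} to the path fibration $\pi_X:X^I\to X\times X$, after first reducing to the case where $X$ itself is an ANR. By definition one has $\mbox{TC}_g(X)=\mbox{secat}_g(\pi_X)$ and $\mbox{TC}(X)=\mbox{secat}(\pi_X)$, so the inequality $\mbox{TC}_g(X)\leq\mbox{TC}(X)$ is immediate and only the reverse inequality requires work.

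First I would invoke homotopy invariance to pass to an ANR model. By hypothesis there is a homotopy equivalence $X\simeq Y$ with $Y$ an ANR. Applying Proposition \ref{invariance} to the homotopy commutative square formed by the diagonals $\Delta_X$ and $\Delta_Y$ together with the equivalences $X\to Y$ and $X\times X\to Y\times Y$ gives $\mbox{TC}_g(X)=\mbox{secat}_g(\Delta_X)=\mbox{secat}_g(\Delta_Y)=\mbox{TC}_g(Y)$; the classical counterpart of Proposition \ref{invariance}, namely homotopy invariance of ordinary sectional category, yields $\mbox{TC}(X)=\mbox{TC}(Y)$ in exactly the same manner. Thus it suffices to establish the equality for the ANR $Y$.

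Next, for $Y$ an ANR I would check that the hypotheses of Theorem \ref{chulo} hold for $\pi_Y:Y^I\to Y\times Y$. The base $Y\times Y$ is an ANR, since a finite product of ANRs is again an ANR. The total space $Y^I$ is an ANR as well: this is the standard fact that the function space $C(K,Y)$, endowed with the compact-open (equivalently, uniform) topology, is a metric ANR whenever $K$ is a compact metric space and $Y$ is a metric ANR, applied here with $K=I$. With both spaces ANRs, Theorem \ref{chulo} gives $\mbox{secat}_g(\pi_Y)=\mbox{secat}(\pi_Y)$, that is, $\mbox{TC}_g(Y)=\mbox{TC}(Y)$; combining this with the previous paragraph completes the argument.

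The only genuine obstacle is the assertion that $Y^I$ is an ANR, because Theorem \ref{chulo} requires the total space of the fibration to be an ANR, not merely the base; this is precisely where the appeal to the function-space theorem is essential. By contrast, the reduction to an ANR is routine once one records that both $\mbox{TC}$ and $\mbox{TC}_g$ are homotopy invariants, the latter being exactly Proposition \ref{invariance}.
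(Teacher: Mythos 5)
Your proposal is correct and follows essentially the same route as the paper: reduce to an ANR model by homotopy invariance of $\mbox{TC}$ and $\mbox{TC}_g$, note that $X\times X$ and $X^I$ are then ANRs, and apply Theorem \ref{chulo} to the path fibration. The paper simply cites Hu's book for the ANR facts that you spell out in more detail.
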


\begin{proof}
As $\mbox{TC}_g(-)$ y $\mbox{TC}(-)$ are homotopy invariants, we
can suppose that $X$ is an ANR space. In this case it is
well-known that $X\times X$ y $X^I$ are ANR spaces (see
\cite{Hu}). Then, applying Theorem \ref{chulo} above to the
fibration $\pi :X^I\rightarrow X\times X$ we conclude the proof.
\end{proof}

\begin{remark}
Another interesting example of TC-like numerical homotopy
invariant agreeing with its generalized version (i.e., using
general covers) is given by the higher analogs of topological
complexity $\mbox{TC}_m(-)$ \cite{R}.
\end{remark}

\bigskip
Finally, we also recover an already known result, prove by T.
Srinivasan \cite{Sr,Sr2}:

\begin{corollary}
Let $X$ be a space having the homotopy type of a connected
CW-complex  (equivalently, of a connected ANR space). Then
$\mbox{cat}(X)=\mbox{cat}_g(X).$
\end{corollary}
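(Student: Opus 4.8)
The plan is to reduce the corollary directly to Theorem \ref{chulo} via the standard identification of Lusternik-Schnirelmann category with the sectional category of the path fibration. First I would invoke homotopy invariance: since both $\mbox{cat}(-)$ and $\mbox{cat}_g(-)$ are homotopy invariants (the latter by Proposition \ref{invariance}, through the characterization $\mbox{cat}_g(X)=\mbox{secat}_g(p:P(X)\rightarrow X)$), I may replace $X$ by a connected ANR without loss of generality. This is the crucial move, because the theorem I wish to apply requires genuine ANR spaces rather than mere CW-homotopy type.

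Next I would recall the identification established earlier in the text, namely that for a path-connected pointed space one has
$$\mbox{cat}_g(X)=\mbox{secat}_g(p:P(X)\rightarrow X),$$
where $P(X)=\{\alpha \in X^I:\alpha (0)=x_0\}$ is the space of paths starting at the base point and $p(\alpha )=\alpha (1)$ is the endpoint fibration. The analogous identification $\mbox{cat}(X)=\mbox{secat}(p:P(X)\rightarrow X)$ holds for the classical invariants by the same Proposition \ref{invariance} applied to the ordinary sectional category. Thus the whole corollary will follow once I know that $\mbox{secat}(p)=\mbox{secat}_g(p)$ for this particular fibration.

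To close the argument I would apply Theorem \ref{chulo} to $p:P(X)\rightarrow X$. This requires checking that both the base $X$ and the total space $P(X)$ are ANR spaces. The base is an ANR by the reduction of the first paragraph. For the total space, I would use the standard fact (as cited in \cite{Hu}, and already invoked in the proof of the corollary for $X^I$ and $X\times X$) that the free path space $X^I$ is an ANR whenever $X$ is, and that $P(X)$, being the fibre of the evaluation $X^I\rightarrow X$, $\alpha \mapsto \alpha (0)$ over the point $x_0$, inherits the ANR property. With both spaces confirmed as ANRs, Theorem \ref{chulo} gives $\mbox{secat}_g(p)=\mbox{secat}(p)$, and combining this with the two identifications yields $\mbox{cat}(X)=\mbox{cat}_g(X)$.

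The main obstacle I anticipate is purely the ANR bookkeeping for $P(X)$: one must be sure that the based path space, and not merely the free path space $X^I$, is an ANR. The cleanest route is to note that $P(X)$ is contractible (it deformation-retracts onto the constant path at $x_0$) and is a fibre of an ANR fibration with ANR base, so standard ANR theory applies; alternatively one can observe that $P(X)$ is homotopy equivalent to a point and invoke invariance on the total-space side as well. Once this technical point is dispatched, the remainder is a formal assembly of results already proved, with no genuinely new content.
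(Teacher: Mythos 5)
Your argument is correct and coincides with the paper's own proof: reduce to the case of a connected ANR by homotopy invariance, identify $\mbox{cat}$ and $\mbox{cat}_g$ with $\mbox{secat}$ and $\mbox{secat}_g$ of the path fibration $p:P(X)\rightarrow X$, and apply Theorem \ref{chulo} after noting that $P(X)$ is an ANR (which follows as in Lemma \ref{lematec1}, since $P(X)=d_0^{-1}(x_0)$ is a closed subspace of the ANR $X^I$ whose inclusion is a cofibration). Only your parenthetical alternative --- replacing $P(X)$ by a point via contractibility and invoking invariance on the total-space side --- would not work as stated, because Theorem \ref{chulo} requires an actual fibration between ANR spaces rather than a map with ANR homotopy type of total space; your main route does not rely on it.
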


\begin{proof}
As $P(X)$ is an ANR space, one has just to follow for the path
fibration $p:P(X)\rightarrow X$ a similar argument to that done in
the previous corollary.
\end{proof}

\subsection{Generalized relative category}

We finish our study with the generalized relative category in the
sense of Doeraene-El Haouari. Recall from Corollary \ref{carac},
that for a given cofibration $i_X:A\hookrightarrow X$ where $X$ is
a normal space, $\mbox{relcat}(i_X)$ can be defined as the least
nonnegative integer $n$ (or infinity) such that $X$ admits a cover
by $n+1$ relatively sectional open subsets. This way, we can
straightforwardly define the generalized relative category, as
follows:

\begin{definition}
Let $i_X:A\hookrightarrow X$ be a cofibration. The
\emph{generalized relative category} of $i_X,$
$\mbox{relcat}_g(i_X),$ is defined as the least nonnegative
integer $n$ (or infinity) such that $X$ admits a cover by $n+1$
relatively sectional subsets.
\end{definition}

Generalized relative category is a homotopy invariant in the
following sense:

\begin{proposition}\label{g-hom-inv}
Consider the following homotopy commutative diagram, where $\alpha
$ and $\beta $ are homotopy equivalences and $i_X$, $i_{X'}$ are
cofibrations:
$$\xymatrix{
{A} \ar[r]^{\alpha }_{\simeq } \ar@{^{(}->}[d]_{i_X} & {A'} \ar@{^{(}->}[d]^{i_{X'}} \\
{X} \ar[r]_{\beta }^{\simeq } & {X'}  }$$ Then
$\mbox{relcat}_g(i_X)=\mbox{relcat}_g(i_{X'}).$
\end{proposition}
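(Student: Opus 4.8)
The plan is to prove invariance by showing that a homotopy equivalence $(\alpha,\beta)$ of cofibred pairs carries relatively sectional subsets to relatively sectional subsets, so that an optimal generalized cover of one pair transports to a generalized cover of the other of the same cardinality. By symmetry (exchanging the roles of the two pairs, using the homotopy inverses of $\alpha$ and $\beta$) it suffices to establish the single inequality $\mbox{relcat}_g(i_{X'})\leq \mbox{relcat}_g(i_X)$; the reverse inequality then follows identically.

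First I would reduce to a strictly commutative situation. Since $i_X$ and $i_{X'}$ are cofibrations, standard facts allow one to replace the given homotopy-commutative square by a strictly commutative one (deforming $\beta$ rel the homotopy on $A$, using the homotopy extension property of $i_X$), and to arrange that $\beta$ has a homotopy inverse $\beta'$ together with $\alpha'$ making the analogous square commute and with the composites $\beta'\beta$, $\beta\beta'$ homotopic to the identity through maps respecting the subspaces $A$, $A'$. This is the same kind of bookkeeping already used implicitly in the proof of Theorem \ref{mainth}, and I would cite it rather than redo it.

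Next, given $\{A_i\}_{i=0}^n$ a cover of $X$ by relatively sectional subsets with homotopies of pairs $H_i:(A_i\times I, A\times I)\rightarrow(X,A)$, I would set $A'_i:=\beta(A_i)$, or more safely $A'_i:=\beta'^{-1}(A_i)$ depending on which direction of transport keeps control of the image, and build the required homotopy of pairs $H'_i:(A'_i\times I, A'\times I)\rightarrow(X',A')$ by conjugating: essentially $H'_i=\beta\circ H_i\circ(\beta'\times 1)$, concatenated at the two ends with the homotopies $\beta\beta'\simeq 1_{X'}$ and $\alpha\alpha'\simeq 1_{A'}$ so that the endpoint condition $H'_i(x,1)\in A'$ holds on the nose (using $\beta(A)\subseteq A'$, which follows from commutativity of the square). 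One must check that the whole concatenated homotopy sends $A'\times I$ into $A'$, which is exactly what the "homotopy of pairs" compatibility of $\alpha$ and $\beta$ guarantees. That $\{A'_i\}$ covers $X'$ follows from surjectivity of the relevant map up to the neighbourhoods involved.

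The main obstacle I anticipate is the endpoint bookkeeping: a relatively sectional subset requires the precise conditions $H(x,0)=x$ and $H(x,1)\in A$, not merely up-to-homotopy versions, and conjugating by a homotopy equivalence only gives these up to the correcting homotopies $\beta\beta'\simeq 1$ and $\alpha\alpha'\simeq 1$. The delicate point is splicing those correcting homotopies on without destroying the condition that $A'\times I$ stays inside $A'$ throughout; here I would lean on the cofibration hypotheses for $i_X$ and $i_{X'}$ to choose the homotopies $\beta\beta'\simeq 1_{X'}$ and $\alpha\alpha'\simeq 1_{A'}$ compatibly (i.e.\ as a homotopy of pairs), which is precisely the situation handled by Lemma \ref{previo}. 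Everything else is routine reparametrisation of homotopies, so I would state the construction and leave these verifications as straightforward.
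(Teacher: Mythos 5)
Your proposal is correct and follows essentially the same route as the paper: strictify the square using the cofibration hypothesis, obtain a homotopy inverse of pairs $\beta'$, pull the cover back along the appropriate map, and splice the correcting homotopy of pairs $1\simeq \beta'\beta$ (resp. $1\simeq \beta\beta'$) onto the conjugated homotopy. Of your two candidate transports only the preimage one, $A_i':=\beta'^{-1}(A_i)$, works (the paper uses $U:=\beta^{-1}(U')$ in the mirrored inequality), and the splice with $\alpha\alpha'\simeq 1_{A'}$ is unnecessary, since being relatively sectional only requires $H'(x,1)\in A'$ rather than that the time-one map restrict to the identity on $A'$.
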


\begin{proof}
As $i_X$ is a cofibration we can suppose that the diagram is
strictly commutative and therefore we have  $\beta
:(X,A)\rightarrow (X',A')$ a homotopy equivalence of pairs where
$\beta _{|A}=\alpha :A\rightarrow A'.$ This way there exists
$\beta ':(X',A')\rightarrow (X,A)$ a homotopy inverse of $\beta $
and therefore we can take $G:(X\times I,A\times I)\rightarrow
(X,A)$ a homotopy of pairs such that $G(x,0)=x$ and $G(x,1)=(\beta
'\circ \beta )(x)$, for all $x\in X$.

Let $U'$ be a relatively sectional subset with respect to $i_{X'}$
and take $H:(U'\times I,A'\times I)\rightarrow (X',A')$ such that
$H(x,0)=x$ and $H(x,1)\in A'.$ If we define $U:=\beta ^{-1}(U')$,
then the homotopy of pairs $F:(U\times I,A\times I)\rightarrow
(X,A)$ given by
$$F(x,t)=\begin{cases}
G(x,2t), & 0\leq t\leq \frac{1}{2} \\
\beta '(H(\beta (x),2t-1)), & \frac{1}{2}\leq t\leq 1
\end{cases}$$
\noindent proves that $U$ is a relatively sectional subset with
respect to $i_X.$ Considering a cover argument we obtain the
inequality $\mbox{relcat}_g(i_X)\leq \mbox{relcat}_g(i_{X'}).$ The
other inequality is similarly proved.
\end{proof}

Now we want to prove that, when dealing among ANR spaces, we have
the equality $\mbox{relcat}(i_X)=\mbox{relcat}_g(i_X).$ In order
to prove this we begin by giving the next technical lemma:

\begin{lemma}\label{lematec1}
Let $i_X:A\hookrightarrow X$ be a cofibration between ANR spaces.
Then there exists a factorization:
$$\xymatrix{
{A} \ar@{^{(}->}[rr]^{i_X} \ar[dr]^{\simeq }_{j} & & {X} \\ &
{\widehat{A}} \ar@{>>}[ur]_p & }$$ \noindent where $\widehat{A}$
is an ANR space, $j$ a homotopy equivalence and $p$ a fibration.
\end{lemma}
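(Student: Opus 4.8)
The plan is to realize the factorization by the standard mapping path space (cocylinder) construction and then to spend essentially all the effort on showing that the resulting total space is an ANR. Explicitly, I would set
$$\widehat{A}=\{(a,\gamma)\in A\times X^I:\gamma(0)=i_X(a)\},$$
define $j:A\rightarrow \widehat{A}$ by $j(a)=(a,c_a)$, where $c_a$ is the constant path at $a$, and $p:\widehat{A}\rightarrow X$ by $p(a,\gamma)=\gamma(1)$. Then $i_X=p\circ j$, and it is classical that $p$ is a Hurewicz fibration and that $j$ is a homotopy equivalence, a strong deformation retraction of $\widehat{A}$ onto $j(A)$ being obtained by shrinking each path $\gamma$ towards $\gamma(0)$. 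Since $i_X$ is an inclusion, the assignment $(a,\gamma)\mapsto\gamma$ identifies $\widehat{A}$ homeomorphically with the subspace $e_0^{-1}(A)=\{\gamma\in X^I:\gamma(0)\in A\}$ of $X^I$, where $e_0:X^I\rightarrow X$, $e_0(\gamma)=\gamma(0)$, is a fibration; under this identification $p$ is just the restriction of $e_1:X^I\rightarrow X$, $e_1(\gamma)=\gamma(1)$.

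With the factorization in hand, the only real content is to prove that $\widehat{A}$ is an ANR, the homotopy-equivalence and fibration properties being formal. First I would recall (see \cite{Hu}) that, since $X$ is an ANR, so is the path space $X^I$. The key structural input is then the classical fact that cofibrations pull back along fibrations: because $A\hookrightarrow X$ is a closed cofibration and $e_0:X^I\rightarrow X$ is a fibration, the inclusion $e_0^{-1}(A)\hookrightarrow X^I$ is again a closed cofibration. Concretely, I would take an NDR representation $(u,H)$ of the pair $(X,A)$ and lift the deformation $H$ through $e_0$ by the homotopy lifting property, obtaining an NDR representation of $(X^I,e_0^{-1}(A))$ with associated function $u\circ e_0$.

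Finally I would conclude that $\widehat{A}$, being a closed cofibred subspace of the metric ANR $X^I$, is itself an ANR. Indeed, the NDR structure furnishes an open neighbourhood $V=(u\circ e_0)^{-1}([0,1))$ of $\widehat{A}$ in $X^I$ together with a retraction $V\rightarrow\widehat{A}$ given by the time-one map of the lifted deformation. As $V$ is open in the ANR $X^I$ it is an ANR, and a retract of an ANR is an ANR; hence $\widehat{A}$ is an ANR, which completes the verification of all the required properties of $i_X=p\circ j$.

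I expect the ANR-ness of $\widehat{A}$ to be the main obstacle: the cocylinder homotopy theory is routine, but correctly identifying and assembling the structural facts --- that $X^I$ is an ANR, that a closed cofibration pulls back to a closed cofibration along $e_0$, and that a closed neighbourhood retract of an ANR is an ANR --- is where the care lies. Should the explicit lifting of the NDR deformation be judged too lengthy, a clean alternative is simply to invoke the known result that the mapping path space of a map between ANRs is an ANR, and then record the homotopy equivalence $j$ and the fibration $p$ as above.
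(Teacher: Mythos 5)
Your proposal is correct and follows essentially the same route as the paper: the mapping path space $\widehat{A}$, its identification with $e_0^{-1}(A)\subseteq X^I$, the fact that the closed cofibration $A\hookrightarrow X$ pulls back along the fibration $e_0$ to a closed cofibration (the paper cites Str{\o}m's Theorem 12 where you sketch the NDR lifting), and the conclusion that a closed cofibred subspace of the ANR $X^I$ is an ANR (the paper cites Hu). The only caveat is that your explicit NDR lifting glosses over making the lifted deformation stationary on $e_0^{-1}(A)$, but since you also propose simply invoking the known results, this matches the paper's level of detail.
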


\begin{proof}
The space $\widehat{A}$ is given from the classical mapping track
(or mapping cocylinder) construction
$\widehat{A}=L_{i_X}=\{(a,\alpha )\in A\times X^I:i_X(a)=\alpha
(0)\}$, in other words, from the pullback
$$\xymatrix{
{\widehat{A}}  \ar[d] \ar[r]  &  {X^I}  \ar[d]^{d_0} \\
{A} \ar[r]_{i_X} & {X} }$$ Such a construction gives us the
classical factorization of the map $i_X$ through a homotopy
equivalence followed by a fibration. Therefore it only remains to
check that $\widehat{A}$ is, actually, an ANR space. Indeed, we
first observe that $\widehat{A}$ is homeomorphic to $p^{-1}(A)$,
since $i_X$ is an inclusion. But the inclusion
$p^{-1}(A)\hookrightarrow X^I$ is a cofibration (see Theorem 12 in
\cite{Str}). As $p^{-1}(A)$ is a closed subspace of $X^I$, whose
corresponding inclusion is a cofibration, we have by Theorem 2.4
of chapt. VI and Theorem 3.2 of chapt. IV in \cite{Hu}, that
$p^{-1}(A)$ is an ANR space.
\end{proof}

Another lemma we need is related to certain kind of open covers.

\begin{lemma}\label{lematec2}
Let $i_X:A\hookrightarrow X$ be a cofibration with $X$ a normal
space. Also suppose $\{U_i\}_{i=0}^n$ an open cover of $X$ in such
a way that for any $i$ we have $A\subseteq U_i$ and there exists a
homotopy $H_i:U_i\times I\rightarrow X$ satisfying that
$H_i(x,0)=x,$ $H_i(x,1)\in A$ and $H_i(-,1)_{|A}\simeq 1_A.$ Then
$\mbox{relcat}(i_X)\leq n.$
\end{lemma}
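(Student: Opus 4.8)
The plan is to massage the given homotopies $H_i$ into new ones satisfying condition (iv) of Corollary \ref{carac}, after which that corollary immediately yields $\mbox{relcat}(i_X)\leq n$. The only defect of the $H_i$ relative to (iv) is at time $1$ on $A$: writing $r_i:=H_i(-,1)_{|A}:A\rightarrow A$ (a genuine map into $A$, since $H_i(x,1)\in A$ and $A\subseteq U_i$), the hypothesis gives only $r_i\simeq 1_A$ rather than $r_i=1_A$. So first I would fix, for each $i$, a homotopy $K_i:A\times I\rightarrow A$ with $K_i(a,0)=r_i(a)$ and $K_i(a,1)=a$.

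The heart of the argument is to propagate the correction $K_i$ from $A$ to all of $U_i$ by the homotopy extension property, \emph{while keeping the values inside $A$}. Here I would use that $A\hookrightarrow U_i$ is a closed cofibration: this is the restriction of the closed cofibration $i_X$ to the open set $U_i\supseteq A$, which is again a cofibration. The two pieces of data available at this stage both take values in $A$, namely the map $H_i(-,1):U_i\rightarrow A$ and the homotopy $K_i:A\times I\rightarrow A$, and they agree at time $0$ on $A$ since $H_i(a,1)=r_i(a)=K_i(a,0)$. Applying the homotopy extension property of $A\hookrightarrow U_i$ \emph{with target $A$}, I obtain $\widehat{H}_i:U_i\times I\rightarrow A$ with $\widehat{H}_i(x,0)=H_i(x,1)$ and $\widehat{H}_i(a,t)=K_i(a,t)$. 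Because the target is $A$, automatically $\widehat{H}_i(x,1)\in A$ for every $x\in U_i$, while on $A$ we get $\widehat{H}_i(a,1)=K_i(a,1)=a$.

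Finally I would concatenate: set $\widetilde{H}_i(x,t)=H_i(x,2t)$ for $t\leq \tfrac{1}{2}$ and $\widetilde{H}_i(x,t)=\widehat{H}_i(x,2t-1)$ for $t\geq \tfrac{1}{2}$, which is well defined and continuous since both pieces equal $H_i(x,1)$ at $t=\tfrac{1}{2}$. Then $\widetilde{H}_i(x,0)=x$, $\widetilde{H}_i(x,1)=\widehat{H}_i(x,1)\in A$, and $\widetilde{H}_i(a,1)=a$ for all $a\in A$, so the open cover $\{U_i\}_{i=0}^n$ together with the homotopies $\widetilde{H}_i$ satisfies exactly condition (iv) of Corollary \ref{carac}; since $X$ is normal, that corollary gives $\mbox{relcat}(i_X)\leq n$.

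The main obstacle, and the whole point of the construction, is the tension in (iv) between the two requirements at time $1$: the endpoint must lie in $A$ for \emph{every} point of $U_i$, yet must be the identity on $A$. Correcting $r_i$ to $1_A$ naively by extending $K_i$ into $X$ would destroy the first property, since nothing would force the extended endpoint back into $A$. The device that resolves this is to perform the extension with target $A$ rather than $X$, which is legitimate precisely because $H_i(-,1)$ and $K_i$ both land in $A$. The one ancillary point I would make sure to justify carefully is the standard fact that restricting the closed cofibration $i_X$ to the open neighbourhood $U_i$ of $A$ yields a cofibration $A\hookrightarrow U_i$.
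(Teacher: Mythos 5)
Your argument is correct, but it takes a genuinely different route from the paper's. The paper proves the lemma directly from the Whitehead-type criterion (Theorem \ref{DH}): using normality it shrinks the cover, builds Urysohn functions $h_i$ and the maps $L_i$, obtains $f:=L(-,1):X\rightarrow T^n(i_X)$ with $L:\Delta_{n+1}\simeq t_n\circ f$, and then observes that the second required homotopy $\tau_n\simeq f\circ i_X$ is supplied componentwise by the homotopies $1_A\simeq H_i(-,1)_{|A}$ inside $A^{n+1}\subseteq T^n(i_X)$ --- so it never needs to make the time-$1$ maps literally the identity on $A$. You instead repair each $H_i$ so that its endpoint is the identity on $A$, by extending the correcting homotopy $K_i$ over $U_i$ \emph{with target $A$} via the homotopy extension property of $A\hookrightarrow U_i$, and then invoke Corollary \ref{carac}(iv); this is a clean reduction that reuses the corollary (whose proof of (iv)$\Rightarrow$(i) already contains the same Urysohn construction), and your observation that the extension must be performed with target $A$ rather than $X$ is exactly the right point. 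The one ingredient you use that the paper does not is the fact that the closed cofibration $i_X$ restricts to a cofibration $A\hookrightarrow U_i$ over the open neighbourhood $U_i$. This is true --- it is the ``restriction'' half of the locality of the homotopy extension property, and under the standing normality hypothesis on $X$ it can be checked from Str\o m's characterization by cutting the deformation down to the open set $\{x\in U_i: D(\{x\}\times I)\subseteq U_i\}$ with a Urysohn function --- but it is not completely trivial, so if you keep this route you should either cite it precisely or include the short proof rather than leaving it as an asserted standard fact.
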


\begin{proof}
Following a similar reasoning to that done in the proof of Theorem
\ref{chulo} we can consider a chain of inclusions $A\subseteq
A_i\subseteq \theta _i\subseteq B_i\subseteq U_i$ with $A_i,B_i$
closed subsets and $\theta _i$ open subsets, such that
$\{A_i\}_{i=0}^n$ covers $X.$ If $h_i:X\rightarrow [0,1]$ is the
Urysohn map given as $h_i(A_i)=\{1\}$ y $h_i(X\setminus \theta
_i)=\{0\}$ then the map $L_i:X\times I\rightarrow X$ defined by
$$L_i(x,t)=\begin{cases}
x, & x\in X\setminus B_i \\
H_i(x,t\cdot h_i(x)), & x\in U_i
\end{cases}$$
\noindent is clearly continuous. Taking $L=(L_0,...,L_n):X\times
I\rightarrow X^{n+1}$ we have a homotopy $L:\Delta _{n+1}\simeq
t_n\circ f,$ where $f:=L(-,1):X\rightarrow T^n(i_X).$ What is
more, taking a homotopy $G_i:A\times I\rightarrow A$ satisfying
$G_i:1_A\simeq H_i(-,1)_{|A}$ it is not difficult to check that
$G:=(G_0,...,G_n):A\times I\rightarrow A^{n+1}\subseteq T^n(i_X)$
gives a homotopy $G:\tau _n\simeq f\circ i_X.$ These facts prove
that the following diagram commutes, up to homotopy:
$$\xymatrix{
{A} \ar[rr]^{\tau _n} \ar@{^{(}->}[d]_{i_X} & & {T^n(i_X)}
\ar@{^{(}->}[d]^{t_n} \\
{X} \ar[urr]^f \ar[rr]_{\Delta _{n+1}} & & {X^{n+1}}   }$$
Therefore, by Theorem \ref{DH} we have the inequality
$\mbox{relcat}(i_X)\leq n.$
\end{proof}

\begin{remark}
Observe, that the converse in the lemma above also holds, that is,
if $\mbox{relcat}(i_X)\leq n$, then there exists an open cover as
in the statement of the lemma. Therefore we actually have another
equivalent characterization of $\mbox{relcat}(i_X)$ (compare with
Corollary \ref{carac} (iv)).
\end{remark}

Now we can state and prove the main result.

\begin{theorem}\label{chulo2}
Let $i_X:A\hookrightarrow X$ be a cofibration between ANR spaces.
Then $\mbox{relcat}(i_X)=\mbox{relcat}_g(i_X).$
\end{theorem}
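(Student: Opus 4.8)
The plan is to prove the two inequalities $\mbox{relcat}_g(i_X)\leq \mbox{relcat}(i_X)$ and $\mbox{relcat}(i_X)\leq \mbox{relcat}_g(i_X)$ separately. The first inequality is trivial: every open relatively sectional cover is in particular a relatively sectional cover (open subsets are subsets), so from Corollary \ref{carac} we immediately get $\mbox{relcat}_g(i_X)\leq \mbox{relcat}^{op}(i_X)=\mbox{relcat}(i_X)$. Hence the whole content lies in the reverse inequality: assuming $X$ is covered by $n+1$ relatively sectional subsets (not necessarily open), I must produce an open cover witnessing $\mbox{relcat}(i_X)\leq n$. The natural strategy is to fatten each relatively sectional subset into an open set while preserving the relative-section data up to homotopy, and then invoke Lemma \ref{lematec2}, whose hypotheses (open cover, each $U_i\supseteq A$, homotopies $H_i$ with $H_i(x,0)=x$, $H_i(x,1)\in A$, and $H_i(-,1)_{|A}\simeq 1_A$) are exactly the right target.

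First I would replace the cofibration $i_X$ by a fibration using Lemma \ref{lematec1}, obtaining a factorization $i_X=p\circ j$ with $j$ a homotopy equivalence and $p:\widehat{A}\to X$ a fibration between ANR spaces. The point of this move is that a relatively sectional subset $U\subseteq X$ carries, by its defining homotopy $H$, a deformation of $U$ into $A$; composing with $j$ converts this into local section data for $p$ over $U$ (up to homotopy), so that a relatively sectional cover of $X$ becomes precisely a cover of $X$ by subsets admitting local homotopy sections of $p$. In other words, the relatively sectional structure translates into a $\mbox{secat}_g$-type structure for the fibration $p$. This lets me bring Theorem \ref{chulo} into play: since $\widehat{A}$ and $X$ are ANR spaces, generalized and ordinary sectional category of $p$ coincide, and the proof of that theorem (via Lemma \ref{epsilon} and Lemma \ref{walsh}) is exactly the machinery that enlarges an arbitrary subset admitting a section into an open neighborhood still admitting a section up to homotopy.

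Concretely, starting from a relatively sectional cover $\{A_i\}_{i=0}^n$ of $X$, I would, for each $i$, apply Lemma \ref{walsh} to the map into $\widehat{A}$ coming from the relative-section homotopy on $A_i$, producing an open neighborhood $U_i\supseteq A_i$ and an extended map on $U_i$ that is $\mathcal{W}$-close (hence homotopic, by Lemma \ref{epsilon}) to the original, and then use the fibration property of $p$ to rectify it to an honest local section over $U_i$. Projecting back through $p$ and concatenating homotopies yields, on each $U_i$, a homotopy $H_i:U_i\times I\to X$ with $H_i(x,0)=x$ and $H_i(x,1)\in A$; I must also arrange $A\subseteq U_i$ (enlarging $U_i$ by a fixed neighborhood of $A$ on which $A$ deformation retracts, as in the second half of the proof of Theorem \ref{mainth}) and check the end condition $H_i(-,1)_{|A}\simeq 1_A$. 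Once the open cover $\{U_i\}_{i=0}^n$ with these homotopies is in hand, Lemma \ref{lematec2} delivers $\mbox{relcat}(i_X)\leq n$, completing the proof.

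The main obstacle I anticipate is the bookkeeping needed to transport the relatively sectional homotopy through the fibration replacement and back: the defining homotopy $H_i$ of a relatively sectional subset deforms $U_i$ \emph{into} $A$, whereas Lemma \ref{walsh} and Theorem \ref{chulo} are phrased in terms of \emph{sections} of the fibration $p$, so I must carefully verify that these two data encode the same thing up to homotopy and that the homotopies can be concatenated to meet precisely the hypotheses of Lemma \ref{lematec2} (in particular the condition $H_i(-,1)_{|A}\simeq 1_A$ rather than the stricter $H_i(a,t)=a$). This translation, together with ensuring the enlarged open sets still contain $A$ and cover $X$, is where the real work lies; the metric ANR estimates themselves are supplied wholesale by Theorem \ref{chulo}.
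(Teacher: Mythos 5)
Your proposal is correct and follows essentially the same route as the paper: factor $i_X=p\circ j$ through $\widehat{A}$ via Lemma \ref{lematec1}, convert the relatively sectional homotopy on each subset into local section data for the fibration $p$, enlarge to open neighborhoods with strict local sections using the machinery of Theorem \ref{chulo} (Lemmas \ref{epsilon} and \ref{walsh}), and transport back to homotopies satisfying the hypotheses of Lemma \ref{lematec2}. The only superfluous step is your worry about arranging $A\subseteq U_i$: since each relatively sectional subset already contains $A$ by definition, any open neighborhood of it does too.
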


\begin{proof}
We only have to check the inequality $\mbox{relcat}(i_X)\leq
\mbox{relcat}_g(i_X).$ Suppose we have an arbitrary subset
$U\subseteq X$ such that $A\subseteq U$ and a homotopy of pairs
$H:(U\times I,A\times I)\rightarrow (X,A)$ satisfying $H(x,0)=x$
and $H(x,1)\in A$, for all $x\in X$. Considering a factorization
$i_X=p\circ j$ through the space $\widehat{A}$ as in Lemma
\ref{lematec1} above, we can define a map $s:U\rightarrow
\widehat{A}$ as the composite
$$U\stackrel{H(-,1)}{\longrightarrow }A\stackrel{j}{\longrightarrow }\widehat{A}$$
Clearly $p\circ s\simeq inc_U$ and $s_{|A}\simeq j,$ where
$inc_U:U\hookrightarrow X$ stands for the natural inclusion.
Observe the, since $p$ is a fibration, we can suppose without
losing any generality that $p\circ s=inc_U$ and $s_{|A}\simeq j.$
Following the proof in Theorem \ref{chulo} one has an open
neighborhood $V\supseteq U$ in $X$ and a map $\sigma :V\rightarrow
\widehat{A}$ such that $p\circ \sigma =inc_V$ and $\sigma
_{|U}\simeq s.$ In particular, one also has
$$\sigma _{|A}=(\sigma _{|U})_{|A}\simeq s_{|A}\simeq j$$
Besides that, if $j':\widehat{A}\rightarrow A$ denotes an homotopy
inverse of $j$, then
$$i_X\circ j'\circ \sigma =p\circ j\circ j'\circ \sigma \simeq
p\circ \sigma =inc_V$$ In other words, there exists a homotopy
$G:V\times I\rightarrow X$ such that $G:inc_V\simeq i_X\circ
j'\circ \sigma $. Obviously, $G(x,1)\in A$ and
$G(-,1)_{|A}=j'\circ (\sigma _{|A})\simeq j'\circ j\simeq 1_A.$
Taking into account an argument by covers and using Lemma
\ref{lematec2}, we conclude the proof.
\end{proof}

As a consequence of the above theorem we can compare the monoidal
topological complexity $\mbox{TC}^M(X)$ and its generalized
version. Recall that the \emph{monoidal topological complexity} of
a space $X$ (see \cite{I-S}) is defined as the least nonnegative
integer $n$ such that $X\times X$ admits an open cover
$\{U_i\}_{i=0}^n$ such that each $U_i$ contains $\Delta
_X(X)=\{(x,x):x\in X\}$ and admits a strict local section
$s_i:U_i\rightarrow X^I$ of the path fibration $\pi
_X:X^I\rightarrow X\times X$ satisfying $s_i(x,x)=c_x$ (here,
$c_x$ denotes the constant path in $X$ at the point $x$). Unlike
the topological complexity, $\mbox{TC}^M(-)$ is not a homotopy
invariant. For instance, consider $X$ the subspace of
$\mathbb{R}^2$
$$X=\bigcup
_{n=0}^{\infty }L_n$$ \noindent where $L_n=\{(t,\frac{t}{n}):t\in
[0,1]\}$ is the segment joining $(0,0)$ with $(1,\frac{1}{n})$ for
$n\geq 1$ and $L_0=[0,1]\times \{0\}$. Clearly $X$ is a
contractible space. Nevertheless, on the other hand we have that
$\mbox{TC}^M(X)\neq 0.$ Indeed, if $\mbox{TC}^M(X)=0$ and
$s:X\times X\rightarrow X^I$ is a global continuous section of the
path fibration $\pi :X^I\rightarrow X\times X$ satisfying
$s(x,x)=c_x$, then for $n\geq 1$ we can take the sequence of
points $x_n=(1,\frac{1}{n})$ and $x_0=(1,0).$ Obviously,
$(x_n,x_0)\rightarrow (x_0,x_0)$ in $X\times X,$ when
$n\rightarrow \infty $ and by continuity we have that
$s(x_n,x_0)\rightarrow s(x_0,x_0)=c_{x_0}.$ This is not possible,
as for all $n\geq 1$, $s(x_n,x_0)$ is a path from $x_n$ to $x_0$
that must pass through the origin $(0,0)$. This example also shows
that, in general, $\mbox{TC}(X)\neq \mbox{TC}^M(X)$. We point out
that this does not refute Iwase-Sakai's conjecture \cite{I-S}
which asserts that $\mbox{TC}(X)=\mbox{TC}^M(X)$ for any locally
finite simplicial complex $X$.

As we are going to see, restricting to a suitable large class of
spaces, monoidal topological complexity becomes a homotopy
invariant. Recall that a locally equiconnected space is a
topological space $X$ in which the diagonal map $\Delta
_X:X\rightarrow X\times X$ is a cofibration. This class of spaces
is sufficiently large. For instance, it is known that CW-complexes
and ANR spaces belong to such a class.

\begin{proposition}\cite[Th. 12]{C-G-V}\label{crucial}
If $X$ is a locally equiconnected space, then
$\mbox{TC}^M(X)=\mbox{relcat}(\Delta _X).$ In particular,
$\mbox{TC}^M(-)$ is a homotopy invariant when we restrict to
locally equiconnected spaces.
\end{proposition}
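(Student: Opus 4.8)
The plan is to build an explicit dictionary between the two structures defining $\mbox{TC}^M(X)$ and $\mbox{relcat}(\Delta _X)$, translating a monoidal strict section $s_i:U_i\rightarrow X^I$ into a relatively sectional homotopy and conversely. Since $X$ is locally equiconnected, $\Delta _X:X\hookrightarrow X\times X$ is a cofibration, so Corollary \ref{carac} applies with $A=\Delta _X(X)$; in particular $\mbox{relcat}(\Delta _X)$ may be computed via open covers $\{U_i\}$ of $X\times X$ whose members contain the diagonal and carry homotopies $H_i:U_i\times I\rightarrow X\times X$ with $H_i(y,0)=y$, $H_i(y,1)\in \Delta _X(X)$ and $H_i(d,t)=d$ for all $d\in \Delta _X(X)$ (the strong form, Corollary \ref{carac}(iii)).

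For the inequality $\mbox{relcat}(\Delta _X)\leq \mbox{TC}^M(X)$, I would start from a monoidal open cover $\{U_i\}_{i=0}^n$ with strict sections $s_i:U_i\rightarrow X^I$ of $\pi _X$ satisfying $s_i(x,x)=c_x$, and set
$$H_i((a,b),t):=(a,\,s_i(a,b)(1-t)).$$
Since $\pi _X(s_i(a,b))=(a,b)$ one has $s_i(a,b)(1)=b$ and $s_i(a,b)(0)=a$, so $H_i((a,b),0)=(a,b)$ and $H_i((a,b),1)=(a,a)\in \Delta _X(X)$. The normalisation $s_i(x,x)=c_x$ is exactly what guarantees that, on the diagonal, $H_i((x,x),t)=(x,x)$ for all $t$; hence each $U_i$ is relatively sectional in the strong sense and Corollary \ref{carac} gives the bound.

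For the reverse inequality $\mbox{TC}^M(X)\leq \mbox{relcat}(\Delta _X)$, I would take an open cover as in Corollary \ref{carac}(iii) and split each homotopy as $H_i=(H_i^1,H_i^2)$ with $H_i^j:U_i\times I\rightarrow X$. Because $H_i(y,1)\in \Delta _X(X)$, the two endpoint values coincide, $H_i^1(y,1)=H_i^2(y,1)$, which lets me concatenate the path $t\mapsto H_i^1(y,t)$ (from $a$ to that common value) with the reverse of $t\mapsto H_i^2(y,t)$ (from the common value back to $b$), defining $s_i:U_i\rightarrow X^I$ by
$$s_i(y)(t):=\begin{cases}H_i^1(y,2t), & 0\leq t\leq \tfrac{1}{2},\\ H_i^2(y,2-2t), & \tfrac{1}{2}\leq t\leq 1.\end{cases}$$
Then $s_i(a,b)(0)=a$ and $s_i(a,b)(1)=b$, so $\pi _X\circ s_i=inc_{U_i}$, and the strong condition $H_i((x,x),t)=(x,x)$ forces $s_i(x,x)=c_x$; thus $\{U_i\}$ is a monoidal cover of cardinality $n+1$.

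The main obstacle I anticipate is not the construction but the book-keeping around the \emph{strong} form of relative section: one must use precisely Corollary \ref{carac}(iii), the homotopy fixing $A$ pointwise, rather than the weaker (iv), since it is exactly the pointwise fixing of the diagonal that matches the normalisation $s_i(x,x)=c_x$ built into $\mbox{TC}^M$; continuity of $s_i$ as a map into $X^I$ then follows from the exponential law. Finally, for the ``in particular'' clause I would invoke homotopy invariance of relative category: a homotopy equivalence $f:X\rightarrow X'$ of locally equiconnected spaces produces the strictly commuting square with vertical maps $\Delta _X,\Delta _{X'}$ and horizontal equivalences $f$ and $f\times f$, whence $\mbox{relcat}(\Delta _X)=\mbox{relcat}(\Delta _{X'})$ and therefore $\mbox{TC}^M(X)=\mbox{TC}^M(X')$.
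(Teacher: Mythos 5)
Your two translations are the right dictionary, and in fact they are essentially the construction the paper itself sketches in the closing Remark of Section 2.2 for the generalized invariants (there the author uses $H(x,y,t)=(s(x,y)(t),y)$ where you use $H_i((a,b),t)=(a,s_i(a,b)(1-t))$; both work). The paper gives no proof of this proposition --- it is quoted from \cite[Th.~12]{C-G-V} --- so yours is an independent derivation from the Section 1 machinery. The individual steps check out: the endpoint computations for $H_i$, the fact that $s_i(x,x)=c_x$ forces $H_i$ to fix the diagonal pointwise, and conversely the concatenation of $H_i^1$ with the reverse of $H_i^2$ (well glued because $H_i(y,1)\in \Delta _X(X)$, continuous into $X^I$ by the exponential law). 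You are also right that one must use the strong form (iii) of Corollary \ref{carac}, and that the implication (i)$\Rightarrow$(iii) comes from the second half of the proof of Theorem \ref{mainth}, which uses only the cofibration property and no normality; so the direction $\mbox{TC}^M(X)\leq \mbox{relcat}(\Delta _X)$ is complete as you wrote it. The ``in particular'' clause is also fine.

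The gap is in the other direction. Corollary \ref{carac} is stated for a cofibration $i_X:A\hookrightarrow X$ with $X$ \emph{normal}; applied to $\Delta _X:X\hookrightarrow X\times X$ this means you need $X\times X$ to be normal, and you need it exactly where you invoke (iii)$\Rightarrow$(i) to conclude $\mbox{relcat}(\Delta _X)\leq \mbox{TC}^M(X)$: that implication is the one that consumes the Urysohn functions in the proof of Theorem \ref{mainth}. Local equiconnectedness only gives a closed diagonal (hence $X$ Hausdorff); it does not give normality of $X\times X$. So as written your argument proves the proposition for locally equiconnected spaces with $X\times X$ normal --- which covers the ANR and CW situations where the paper actually applies it in Corollary \ref{monoidal}, but is formally weaker than the statement, which \cite{C-G-V} establishes for arbitrary locally equiconnected spaces by working with the join/Whitehead characterization directly rather than through open covers. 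Either add the normality hypothesis explicitly or replace that half by the argument of the cited source.
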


If we define the \emph{generalized monoidal topological
complexity} of a locally equiconnected space $X$ as
$\mbox{TC}^M_g(X):=\mbox{relcat}_g(\Delta _X)$, then we have the
following corollary of Theorem \ref{chulo2}:

\begin{corollary}\label{monoidal}
If $X$ is a locally equiconnected space having the homotopy type
of a CW-complex (equivalently, of an ANR-space), then
$\mbox{TC}^M(X)=\mbox{TC}^M_g(X).$
\end{corollary}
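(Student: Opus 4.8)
The plan is to combine three ingredients already assembled in the paper: the homotopy invariance of $\mbox{TC}^M(-)$ among locally equiconnected spaces (Proposition \ref{crucial}), the analogous homotopy invariance of $\mbox{relcat}_g(-)$ among cofibrations (Proposition \ref{g-hom-inv}), and the main equality $\mbox{relcat}(i_X)=\mbox{relcat}_g(i_X)$ for cofibrations between ANR spaces (Theorem \ref{chulo2}). The target statement is about a locally equiconnected space $X$ that has the homotopy type of a CW-complex; equivalently, of an ANR. Since every piece of the argument is phrased in terms of the diagonal cofibration $\Delta_X:X\hookrightarrow X\times X$, the real content is to reduce the general case to the case where $X$ is literally an ANR and then invoke Theorem \ref{chulo2} for $i_X=\Delta_X$.

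First I would note that both $\mbox{TC}^M(X)=\mbox{relcat}(\Delta_X)$ and $\mbox{TC}^M_g(X)=\mbox{relcat}_g(\Delta_X)$ hold by definition together with Proposition \ref{crucial}, so the claim is exactly $\mbox{relcat}(\Delta_X)=\mbox{relcat}_g(\Delta_X)$. Next I would use homotopy invariance to replace $X$ by an ANR. Choose a homotopy equivalence $\beta:X\to X'$ with $X'$ an ANR (possible because $X$ has the homotopy type of an ANR). This induces $\beta\times\beta:X\times X\to X'\times X'$, also a homotopy equivalence, and a homotopy commutative square with vertical maps $\Delta_X$ and $\Delta_{X'}$. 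Since $X$ is locally equiconnected, $\Delta_X$ is a cofibration, and likewise $\Delta_{X'}$ is a cofibration because an ANR is locally equiconnected. Then Proposition \ref{g-hom-inv} applies to give $\mbox{relcat}_g(\Delta_X)=\mbox{relcat}_g(\Delta_{X'})$. The corresponding invariance for ordinary relative category is standard (and implicit in the framework of \cite{D-H}), yielding $\mbox{relcat}(\Delta_X)=\mbox{relcat}(\Delta_{X'})$; alternatively one can route this through Proposition \ref{crucial} itself, which already asserts that $\mbox{TC}^M(-)$ is a homotopy invariant on locally equiconnected spaces.

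Having reduced to $X'$, an ANR, I would finally apply Theorem \ref{chulo2} to the cofibration $\Delta_{X'}:X'\hookrightarrow X'\times X'$. This requires that both $X'$ and $X'\times X'$ be ANR spaces; the second follows from the well-known fact (cited earlier from \cite{Hu}) that a finite product of ANRs is an ANR. Theorem \ref{chulo2} then gives $\mbox{relcat}(\Delta_{X'})=\mbox{relcat}_g(\Delta_{X'})$, and chaining the equalities back through the invariance steps produces $\mbox{relcat}(\Delta_X)=\mbox{relcat}_g(\Delta_X)$, i.e. $\mbox{TC}^M(X)=\mbox{TC}^M_g(X)$.

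The main obstacle, and the only place demanding genuine care rather than bookkeeping, is verifying that the reduction to an ANR preserves the local equiconnectedness hypothesis and that the diagonal of the ANR model is a cofibration, so that Theorem \ref{chulo2} is legitimately applicable with $i_{X'}=\Delta_{X'}$. One must be careful that the homotopy equivalence $\beta$ does not merely respect the underlying spaces but genuinely fits into the diagram of Proposition \ref{g-hom-inv} with $\Delta$ as the vertical cofibrations; once this diagram is in place, every remaining step is a direct citation. I expect no computation to be needed, only the careful assembly of these invariance and ANR-stability facts.
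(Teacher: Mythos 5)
Your proposal is correct and follows essentially the same route as the paper, whose proof is a one-line citation of Proposition \ref{g-hom-inv}, Proposition \ref{crucial} and Theorem \ref{chulo2}; you have simply made explicit the intermediate steps (passing to an ANR model $X'$, checking that $\Delta_{X'}$ is a cofibration between ANR spaces, and chaining the invariance statements) that the paper leaves to the reader.
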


\begin{proof}
Just take into account Proposition \ref{g-hom-inv}, Proposition
\ref{crucial} and Theorem \ref{chulo2} above.
\end{proof}

\begin{remark}
There is an alternative definition of $\mbox{TC}^M_g(X)$ for a non
necessary locally equiconnected space. We can take subsets
$U\subseteq X\times X$ such that $\Delta _X(X)\subseteq U$
together with a strict local section $s:U\rightarrow X^I$ of $\pi
_X:X^I\rightarrow X\times X$ satisfying $s(x,x)=c_x,$ for all
$x\in X.$ Then, define $\mbox{TC}^M_g(X)$ as the least nonnegative
integer $n$ (or infinity) such that $X\times X$ admits a cover of
this kind of subsets. With this alternative definition we do not
know if $\mbox{TC}^M_g(X)=\mbox{relcat}_g(\Delta _X)$ when $X$ is
a locally equiconnected space. What we have is the inequality
$\mbox{relcat}_g(\Delta _X)\leq \mbox{TC}^M_g(X)$. Indeed, suppose
a subset $U\subseteq X\times X$ verifying the properties above
mentioned. Then $H:U\times I\rightarrow X\times X$, defined as
$H(x,y,t):=(s(x,y)(t),y)$ verifies $H(x,y,0)=(x,y),$
$H(x,y,1)=(y,y)\in \Delta _X(X)$ and $H(x,x,t)=(x,x).$ In other
words, $U$ is a relatively sectional subset with respect to the
cofibration $\Delta _X.$ Considering covers one can
straightforwardly deduce that $\mbox{relcat}_g(\Delta _X)\leq
\mbox{TC}^M_g(X)$. In this case, concerning Corollary
\ref{monoidal}, the more we can say is: ``If $X$ is an ANR space,
then $\mbox{TC}^M(X)=\mbox{TC}^M_g(X)$". Indeed, combining the
inequality $\mbox{relcat}_g(\Delta _X)\leq \mbox{TC}^M_g(X)$ with
Proposition \ref{crucial} and Theorem \ref{chulo2} we obtain
$$\mbox{TC}^M(X)=\mbox{relcat}(\Delta _X)=\mbox{relcat}_g(\Delta
_X)\leq \mbox{TC}^M_g(X)$$ \noindent concluding that
$\mbox{TC}^M(X)=\mbox{TC}^M_g(X).$
\end{remark}

\end{document}